\documentclass[reqno,a4paper,tbtags]{amsart}

\usepackage[margin=25mm]{geometry}

\usepackage{amsaddr}
\usepackage{amssymb}
\usepackage{mathtools}
\usepackage[shortlabels]{enumitem}
\usepackage{dsfont}
\usepackage{xcolor}
\usepackage{booktabs,multirow}
\usepackage{bm}
\usepackage[labelfont=bf,labelformat=simple]{subcaption}
	
\usepackage[noadjust]{cite}
\usepackage{hyperref}

\newtheorem{theorem}{Theorem}[section]

\newtheorem{proposition}[theorem]{Proposition}
\newtheorem{corollary}[theorem]{Corollary}
\theoremstyle{definition}

\theoremstyle{remark}
\newtheorem{remark}[theorem]{Remark}

\numberwithin{equation}{section}

\newcommand*{\set}[1]{\left\{#1\right\}}
\newcommand{\real}{\mathbb R}
\newcommand{\ex}{\mathsf E}

\DeclareMathOperator{\Cov}{Cov}

\newcommand{\bs}[1]{}

\allowdisplaybreaks

\begin{document}

\title[Fractional Gaussian noise: projections, prediction, norms]{Fractional Gaussian noise: projections, prediction, norms}

\author{Iryna Bodnarchuk$^1$, Yuliya Mishura$^{1}$ and Kostiantyn Ralchenko$^{1,2}$}
\address{$^1$Taras Shevchenko National University of Kyiv, Ukraine\\
$^2$University of Vaasa, Finland}

\email{ibodnarchuk@knu.ua, yuliyamishura@knu.ua, kostiantynralchenko@knu.ua}

\thanks{YM is supported by The Swedish Foundation for Strategic Research, grant UKR22-0017, and by the Japan Science and Technology Agency CREST, project reference number JPMJCR2115.
KR is supported by the Research Council of Finland, decision number 359815.
YM and KR acknowledge that the present research is
carried out within the frame and support of the ToppForsk project no.
274410 of the Research Council of Norway with the title STORM: Stochastics
for Time-Space Risk Models.}

\begin{abstract}
We examine the one-sided and two-sided (bilateral) projections of an element of fractional Gaussian noise onto its neighboring elements. We establish several analytical results and conduct a numerical study to analyze the behavior of the coefficients of these projections as functions of the Hurst index and the number of neighboring elements used for the projection. We derive recurrence relations for the coefficients of the two-sided projection. Additionally, we explore the norms of both types of projections. Certain special cases are investigated in greater detail, both theoretically and numerically.
\end{abstract}

\keywords{Fractional Brownian motion, fractional Gaussian noise, coefficients of projection, prediction coefficients, $L_2$-norm, autocovariance function}

\subjclass{Primary 60G22. Secondary 60G15, 60G25}

\maketitle

\section{Introduction}

Consider a fractional Brownian motion (fBm) $B^H=\{B_t^H, t\ge 0\}$ with Hurst index $H\in(0,1)$. That is, $B^H$ is a centered Gaussian process with covariance function of the form
\begin{gather}\label{fBmCov}
R(t,s) \coloneqq \ex B_t^HB_s^H= \frac{1}{2}\left(t^{2H}+s^{2H}-|t-s|^{2H}\right).
\end{gather}

Let
$$
\Delta_k=B_k^H-B_{k-1}^H,\quad k\ge 1,
$$
be the $k$th increment of fBm taken in subsequent integer points $k\ge 1$. Then we get from  \eqref{fBmCov} that
\begin{gather}\label{rhok}
 \rho_{k}=\ex \Delta_1\Delta_{k+1}=\frac12\left(|k+1|^{2H}-2|k|^{2H}+|k-1|^{2H}\right),\quad k\ge 1,\quad \rho_{0}=1.
\end{gather} Due to the stationarity of the increments,
\begin{gather*}
 \ex \Delta_{k}\Delta_{l}=\rho_{|k-l|},\quad k,l\ge 1.
\end{gather*}
These subsequent increments $
 \Delta_k$, $k\ge 1$, create a process that is named fractional Gaussian noise (fGn). The main properties of this relatively simple discrete-time Gaussian process are stationarity and the presence of what we call memory. The length of memory is infinite, however, its intensity depends on Hurst index $H$, which in turn should be the object of statistical estimation. For the properties of fGn and statistical estimation   see, e.g.\ \cite{beran, deli, liu, mandelbrot1, mandelbrot2, meerson, qian, wang} (of course, any  list of references is not exhaustive). These properties have made fractional Gaussian noise extremely popular in applications, in particular,  to physics (\cite{li1, li2}), hydrology (\cite{molz}), information theory (\cite{stratonovich}),   signal detection (\cite{barton}), related permutation entropy (\cite{davalos, zunino}) and many other fields. However, considerable analytical and computational difficulties arise in those problems in which the covariance matrices of fBm and fGn  and their determinants are involved. The reason for this is obvious, it lies in the huge number of various fractional powers presented in covariance function and covariance matrices, and the source of this
is  the fractional Hurst index, see \eqref{rhok}. For more description of related problems, see \cite{malyarenko,  MishuraRalchenkoSchilling2022, risks}.

In particular, the paper \cite{risks} contains three open problems related to the covariance matrices of fBm and fGn.
To formulate these problems, let us define, for $n\ge1$, the triangular array $\set{d_{j,k}, 1\le j\le k\le n}$ by the following relation:
\begin{equation}\label{eq:chol}
\sum_{j=1}^{p \wedge r}d_{j,p}d_{j,r} = R(p,r),
\quad p,r\in\set{1,\dots,n}.
\end{equation}
The sequence $\set{d_{j,k}, 1\le j\le k\le n}$ exists and is unique, as  \eqref{eq:chol} represents the Cholesky decomposition of the covariance matrix of fBm.
Similarly to \eqref{eq:chol}, we can define the Cholesky decomposition of the covariance matrix of fGn as follows
\[
\sum_{j=1}^{p \wedge r}\ell_{j,p}\ell_{j,r} = \rho_{|p-r|},
\quad p,r\in\set{1,\dots,n}.
\]
The properties of the sequences $\set{d_{j,k}}$ and $\set{\ell_{j,k}}$ for $H\in(\frac12,1)$ were investigated in \cite{risks}.
In particular, the positivity of both sequences was established, along with the monotonicity of $\set{d_{j,k}}$ with respect to $k$ for a fixed $j$.
In their study of this problem, the authors of \cite{risks} found a connection between the projection  coefficients, that is, the coefficients of the one-sided projection of any value of a stationary Gaussian process onto finitely many subsequent elements, and the Cholesky decomposition of the covariance matrix of the process. More precisely, according to the theorem of normal correlation,  there exists a real-valued sequence
$\{\Gamma_{n}^{k}, 2\le k\le n\}$
such that
\begin{gather}\label{projectgamma1}
 \ex (\Delta_1\mid\Delta_2,\ldots, \Delta_n)
 =\sum_{k=2}^{n}\Gamma_{n}^k\Delta_k.
\end{gather}
For any $n\ge1$, the coefficients $\{\Gamma_{n}^{k}, 2\le k\le n\}$ can be computed as a solution to the following linear system of equations
\begin{equation}\label{eq:gamma-sys}
\rho_{l-1} = \sum_{k=2}^n \Gamma_n^k \rho_{|l-k|},
    \quad 2\le l\le n.
\end{equation}
The properties of the coefficients $\Gamma_{n}^k$ were further investigated in \cite{MishuraRalchenkoSchilling2022}, where recurrence relations for them were obtained, see \eqref{eq:Gamma-last}--\eqref{eq:Gamma-k} below.

Moreover, the following open problems were posed in \cite{risks} as conjectures.

\begin{enumerate}[itemindent=81pt,leftmargin=*,label=\textbf{Conjecture A\arabic*.},ref=A\arabic*]
\item\label{hyp:cols}
For all $r\ge1$, $d_{1,r}> d_{2,r} > \dots > d_{r,r}$.

\item\label{hyp:diag}
For all $1\le j\le k$, $\ell_{j,k} > \ell_{j+1,k+1}$.

\item\label{hyp:sol}
The coefficients $\Gamma_n^k$ for $n = 2, 3, \dots$, $k =
2, 3, \dots, n$, are strictly positive.
\end{enumerate}

It was shown in \cite{risks} that Conjecture \ref{hyp:sol} implies Conjecture \ref{hyp:diag}, which in turn implies  Conjecture \ref{hyp:cols}. Also, Conjecture \ref{hyp:sol} was confirmed in \cite{MishuraRalchenkoSchilling2022, risks} numerically for a wide range of values of $n$. Note also, that due to stationarity of fGn,   coefficients of one-sided projection can be considered as the prediction coefficients, because
\[
\ex (\Delta_n\mid\Delta_1,\ldots, \Delta_{n-1})
 =\sum_{k=1}^{n-1}\Gamma_{n}^{n-k+1}\Delta_k.
\]

To understand better the properties of projection coefficients for other Gaussian--Volterra noises, we considered in \cite{bodnmish} a very simple process of the form
\begin{equation*}
    X_t=\int_0^t(t-s) \,dW_s,
\end{equation*}
where $W$ is a Wiener process.

In \cite{bodnmish} we establish that $X$, like fBm, is self-similar, non-Markov, has a long memory, its increments over non-overlapping intervals are positively correlated. But, unlike fBm, its increments are not stationary.
The projection problem of the form~\eqref{projectgamma1}  for the process $X$ was considered in~\cite{bodnmish}.
Using a combinatorial approach, we obtained the explicit formulas for the respective projection coefficients. Note that this is apparently one of the few cases when the coefficients can be calculated explicitly.
We established that the coefficients are not all positive, moreover, they are alternating. Thus we can assume that stationarity or non-stationarity of the  increments is precisely the property that is determining the signs of projection coefficients. But for now, this statement is still a hypothesis.

With all these previous results in mind, in this paper we considered three main tasks: to proceed analytically with the properties of the coefficients of one-sided projection, to investigate the coefficients of the two-sided (bilateral) projection and to study the norms of both kinds of projections as the functions of $n$ and $H$. Along the way, we made a rather unexpected observation: while all the coefficients of one-sided projection remain positive, at least within the limits of our observations, with a two-sided projection one of the coefficients steadily becomes negative, but quite small in absolute value. We have established this property analytically for a small value of $n$, although this ``exceptional'' behavior  seems somewhat strange and inexplicable, from a logical point of view.

The paper is organized as follows: Section \ref{sec2} is devoted to some  properties of the coefficients of one-sided projections. More precisely, we establish that all coefficients of one-sided projection~\eqref{projectgamma1} in the case $n=4$ are strictly positive and at the same time we show what technical difficulties will arise along the way and why we really limit ourselves to small values of  $n$ in precise  calculations.
Then we consider another form of projection that contains orthogonal summands,  calculate the coefficients of such projection and give a simple equality for the $L_2$-norm of one-sided projection. In Sections \ref{sec3} and \ref{sec4} we calculate the coefficients of bilateral projection and comment in detail their (somehow unexpected) properties. Section \ref{sec5} is devoted to the norms of projections. We see that the norms are stabilized after some, not very big, value of $n$ and both of them do not tend to 1. It means that, from the point of view of the theory of stationary sequences, fractional Gaussian noise is purely nondeterministic sequence.

\section{Some results concerning coefficients of one-sided projection}\label{sec2}
As it was explained in the Introduction, Conjecture~A3 about the sign of the coefficients of one-sided projection~\eqref{projectgamma1} (namely, the hypothesis that they are strictly positive) has not been proved analytically yet. However, we can a bit proceed in this direction, and demonstrate simultaneously what technical difficulties appear on this way in the general case, in comparison with \cite{MishuraRalchenkoSchilling2022}.

\subsection{Coefficients of one-sided projection \texorpdfstring{\eqref{projectgamma1}}{(1.4)} in the case \texorpdfstring{$\bm{n=4}$}{n=4}}

Let $n=4$. Then
\begin{gather*}
 \ex (\Delta_{1}\mid\Delta_{2},\Delta_{3},\Delta_{4})
 =\Gamma_4^2\Delta_2+\Gamma_4^3\Delta_3+\Gamma_4^4\Delta_4.
\end{gather*}

It was proved in \cite{MishuraRalchenkoSchilling2022} that
$\Gamma_n^2>0$ for any $n\ge 2$, and it was established in \cite[Proposition~3]{MishuraRalchenkoSchilling2022} that for any $H\in(1/2,1)$, $\Gamma_4^2>\Gamma_4^3$ and $\Gamma_4^4>0$.

Positivity of $\Gamma_4^3$ was established in \cite{MishuraRalchenkoSchilling2022} numerically.
Now we establish it analytically. According to equality~(20) from \cite{MishuraRalchenkoSchilling2022}
\begin{gather}\label{gamma43}
 \Gamma_4^3=\frac{\rho_1^2\rho_2-\rho_2^3+\rho_1\rho_2\rho_3-\rho_1^2+\rho_2-\rho_1\rho_3}{1+2\rho_1^2\rho_2-\rho_2^2-2\rho_1^2},
\end{gather}
and the denominator in the right-hand side of \eqref{gamma43}, being a determinant of covariance matrix, is strictly positive.
Therefore, it is sufficient to prove that
\begin{gather}\label{gamma43>0}
 \rho_1^2\rho_2-\rho_2^3+\rho_1\rho_2\rho_3-\rho_1^2+\rho_2-\rho_1\rho_3>0.
\end{gather}

It was mentioned in \cite[Remark 5]{MishuraRalchenkoSchilling2022}
(and it is very easy to see by direct transformations) that the left-hand side of \eqref{gamma43>0} equals $(1-\rho_2)(\rho_2+\rho_2^2-\rho_1^2-\rho_1\rho_3)$. Also, all coefficients $\rho_k<1$. Therefore, it is sufficient to prove that
\begin{gather*}\label{rhohat}
 \hat{\rho}:=\rho_2+\rho_2^2-\rho_1^2-\rho_1\rho_3>0.
\end{gather*}
We have that $\rho_2>\rho_1^2$ and $\rho_2^2<\rho_1\rho_3$
(it was established in Lemma~3 and Corollary~1 of \cite{MishuraRalchenkoSchilling2022}, respectively). This means that the direct derivation for the  sign of \eqref{rhohat} is not obvious. Therefore, let us simply substitute the values of $\rho_1,\rho_2,\rho_3$ and proceed with the formulas containing corresponding powers. It is very easy to see that as a function of $H$, $\hat{\rho}$ has a form
$$
\hat{\rho}=\hat{\rho}(H)= \frac14\left( 9^{2H}-8^{2H}-2\cdot 6^{2H}+4\cdot 4^{2H}-2\cdot 2^{2H}-1 \right ),
$$
and $\hat{\rho}=0$ if $H=1/2$ and $H=1$.

\begin{proposition}
 $\hat{\rho}(H)>0$ for all $H\in(1/2,1).$
\end{proposition}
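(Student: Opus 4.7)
The plan is to set $x=2H$ and prove $f(x)>0$ for $x\in(1,2)$, where
\[
f(x):=9^x-8^x-2\cdot 6^x+4\cdot 4^x-2\cdot 2^x-1,
\]
so that $\hat\rho(H)=f(2H)/4$; the identities $f(1)=f(2)=0$ are already noted in the paper.

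My first step would be to inspect the boundary derivatives. A direct computation gives
\[
f'(1)=6\ln 3-8\ln 2=\ln(729/256)>0,\qquad f'(2)=90\ln 3-144\ln 2=18\ln(243/256)<0,
\]
where the second sign uses the elementary estimate $3^5=243<256=2^8$. Hence $f>0$ immediately to the right of $1$ and just to the left of $2$, while $f<0$ just to the right of $2$. Since $f(x)\to+\infty$ as $x\to+\infty$, the intermediate value theorem produces at least one zero of $f$ in $(2,\infty)$.

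The central ingredient will be the classical rule of signs for exponential polynomials (Laguerre; see, e.g., P\'olya--Szeg\H{o}, \emph{Problems and Theorems in Analysis II}): for $g(x)=\sum_{i=1}^n c_i a_i^x$ with $0<a_1<\dots<a_n$ and $c_i\ne 0$, the number of real zeros of $g$, counted with multiplicity, is at most the number of sign changes of $(c_1,\dots,c_n)$. Ordering the bases of $f$ as $1,2,4,6,8,9$ gives the coefficient vector $(-1,-2,4,-2,-1,1)$ with sign pattern $(-,-,+,-,-,+)$, exhibiting exactly three sign changes. Consequently $f$ has at most three real zeros on $\mathbb R$.

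These three zeros are now fully accounted for: the two simple zeros at $x=1$ and $x=2$, together with at least one zero in $(2,\infty)$, exhaust the Laguerre budget. Hence $f$ has no zero in $(1,2)$, and since $f>0$ just to the right of $1$, continuity gives $f>0$ on the whole of $(1,2)$, as desired. The main obstacle I foresee is simply invoking the Laguerre rule cleanly; if a more self-contained argument is preferred, a proof by induction on $n$, combining Rolle's theorem with division by $a_1^x$, would be a standard back-up.
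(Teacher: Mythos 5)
Your argument is correct, and it takes a genuinely different route from the paper's. The paper makes the same reduction to $\tilde\rho(x)=9^{x}-8^{x}-2\cdot 6^{x}+4\cdot 4^{x}-2\cdot 2^{x}-1$ on $(1,2)$, but then substitutes $u=3^x$, $v=2^x$ and studies the two-variable function $F(u,v)=(u-v)^2-v^3+3v^2-2v-1$ on the rectangle $[3,9]\times[2,4]$: it verifies that $F$ has no interior critical point, so its minimum over the rectangle is attained on the boundary, and then uses the fact that the curve $(3^x,2^x)$ meets that boundary only at the corners $x=1$ and $x=2$, where $\tilde\rho=0$. You instead stay with the one-variable exponential sum and invoke Laguerre's rule of signs: the coefficient sequence $(-1,-2,4,-2,-1,1)$ for the bases $1<2<4<6<8<9$ has exactly three sign changes, so $\tilde\rho$ has at most three real zeros counted with multiplicity; the simple zeros at $x=1$ and $x=2$ (your boundary derivatives $f'(1)=\ln(729/256)>0$ and $f'(2)=18\ln(243/256)<0$ check out) together with the zero in $(2,\infty)$ forced by $f'(2)<0$ and $f(x)\to+\infty$ exhaust this budget, so no zero can lie in $(1,2)$ and positivity follows from $f'(1)>0$. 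The trade-off: your proof leans on a classical but nontrivial external theorem, whereas the paper's is self-contained calculus; in return, your zero count is airtight and actually locates every real zero of $\tilde\rho$ on the whole line, while the paper's passage from the minimum of $F$ over the rectangle to the behaviour of $F$ along the curve $(3^x,2^x)$ is the delicate step of that argument.
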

\begin{proof}
 Denote $2H=x\in(1,2)$, and redesignate
 $$
 \tilde{\rho}(x)= 4 \hat{\rho}(H)=9^{x}-8^{x}-2\cdot 6^{x}+4\cdot 4^{x}-2\cdot 2^{x}-1
 =(3^x-2^x)^2 -8^{x} +3\cdot 4^x-2\cdot 2^{x}-1.
 $$

Furthermore, denote $3^x=u$ and $2^x=v$, forgetting for the moment about their connection. Then $u\in [3;9]$ and $v\in [2;4]$.

Consider the function of two variables
$$
F(u,v)=(u-v)^2  -v^3+3v^2-2v-1,\quad (u,v)\in[3;9]\times[2;4].
$$

Then
$$
\frac{\partial F}{\partial u}=2(u-v),\quad \mbox{and} \quad \frac{\partial F}{\partial v}=-2(u-v)-3v^2+6v-2.
$$

Let's find the points where $\frac{\partial F}{\partial u}=\frac{\partial F}{\partial v}=0$. Then $u=v$ and consequently $u=v= 1\pm1/\sqrt3<2$. Therefore, there is no such points at the rectangle $[3;9]\times[2;4]$ and so, the smallest value of $F(u,v)$ is achieved on the boundary of this rectangle. But, remembering the relationship between $u$ and $v$,  on the boundary $u=3$ we have $v=2$, and at this point
$ \tilde{\rho}(1)=0$, and on the boundary $u=9$ we have that $v=4$ and again $\tilde{\rho}=\tilde{\rho}(2)=0$ while, for example, at point $x=3/2$
$$
\tilde{\rho}(x)=\tilde{\rho}(3/2)
=58-20\sqrt{2}-12\sqrt{6}\approx 0.3219>0.
$$

It means that minimal value of $\tilde{\rho}(x)$ is achieved at points $x=1$ and $x=2$ being  equal  zero, whence the proof follows.
\end{proof}

\subsection{Some ``conditional'' relations}  Now our goal is to consider general value of $n$ and construct some kind of recurrence relations.
According to \cite[Proposition~6]{MishuraRalchenkoSchilling2022},
the first coefficient $\Gamma_4^2>0$ for all $n\ge 2$.
Now, assume that we already proved that $\Gamma_n^k>0$ for some $n\ge 2$ and any $2\le k\le n$.

\begin{proposition}
 If we know that $\Gamma_n^k>0$ for some $n\ge 2$ and any $2\le k\le n$, then the last coefficient
 $\Gamma_{n+1}^{n+1}$ in the expansion \eqref{projectgamma1} for $n+1$ is positive: $\Gamma_{n+1}^{n+1}>0$.
\end{proposition}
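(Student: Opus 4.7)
My plan is to invoke the Levinson--Durbin-type recurrence \eqref{eq:Gamma-last} from \cite{MishuraRalchenkoSchilling2022}, which should give
\begin{equation*}
\Gamma_{n+1}^{n+1}=\frac{\rho_n-\sum_{k=2}^n\Gamma_n^k\rho_{n+1-k}}{1-\sum_{k=2}^n\Gamma_n^k\rho_{k-1}}.
\end{equation*}
The denominator equals the variance $\ex\bigl[(\Delta_1-\sum_{k=2}^n\Gamma_n^k\Delta_k)^2\bigr]$ of the one-sided prediction error, hence is strictly positive. The sign of $\Gamma_{n+1}^{n+1}$ therefore coincides with the sign of $N:=\rho_n-\sum_{k=2}^n\Gamma_n^k\rho_{n+1-k}$, and I will show $N>0$. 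By the orthogonality of $L_2$-projections, $N$ also admits the probabilistic reading $N=\Cov(\Delta_1,\Delta_{n+1}\mid\Delta_2,\dots,\Delta_n)$, so proving $\Gamma_{n+1}^{n+1}>0$ is the same as checking that conditioning on the intervening increments preserves the positive correlation of the two endpoints.

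The main algebraic step is to exploit the identity supplied by \eqref{eq:gamma-sys} at $l=n$, namely $\rho_{n-1}=\sum_{k=2}^n\Gamma_n^k\rho_{n-k}$. Writing $\rho_{n+1-k}=\rho_{n-k}\cdot(\rho_{n+1-k}/\rho_{n-k})$ and invoking the inductive positivity hypothesis $\Gamma_n^k>0$, I would reduce the problem to bounding each ratio $\rho_{n+1-k}/\rho_{n-k}$ (for $k=2,\dots,n$) by $\rho_n/\rho_{n-1}$. This amounts to the monotonicity of the map $j\mapsto\rho_{j+1}/\rho_j$, i.e.\ the log-convexity of the sequence $(\rho_j)_{j\ge 0}$. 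Granting this,
\begin{equation*}
\sum_{k=2}^n\Gamma_n^k\rho_{n+1-k}\le\frac{\rho_n}{\rho_{n-1}}\sum_{k=2}^n\Gamma_n^k\rho_{n-k}=\rho_n,
\end{equation*}
with strict inequality (the ratios $\rho_{j+1}/\rho_j$ are not all equal for $H\in(1/2,1)$, since $(\rho_j)$ is not geometric), which yields $N>0$.

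The remaining ingredient to supply is log-convexity of $(\rho_j)_{j\ge 0}$. I would establish it for $j\ge 1$ via the integral representation
\begin{equation*}
\rho_j=H(2H-1)\int_0^1\!\!\int_0^1(j-1+u+v)^{2H-2}\,du\,dv,
\end{equation*}
using that for $H\in(1/2,1)$ the function $t\mapsto t^{2H-2}$ is log-convex in $t$, and log-convexity in a parameter is preserved by integration against a non-negative measure (a standard consequence of the Cauchy--Schwarz inequality). The boundary case $\rho_1^2\le\rho_0\rho_2=\rho_2$ coincides with Lemma~3 of \cite{MishuraRalchenkoSchilling2022}. I expect the main obstacle to be a careful verification of this log-convexity on the full range $j\ge 0$; once it is in place, the rest of the argument is essentially arithmetic and uses only the positivity of the $\Gamma_n^k$.
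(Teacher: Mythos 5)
Your proposal is correct and follows essentially the same route as the paper's proof: the recurrence \eqref{eq:Gamma-last}, positivity of the denominator, the identity $\rho_{n-1}=\sum_{k=2}^n\Gamma_n^k\rho_{n-k}$, and the log-convexity of $(\rho_j)$ (i.e.\ the monotonicity of the ratios $\rho_{j+1}/\rho_j$) combined with the inductive positivity of the $\Gamma_n^k$ to bound the sum by $\rho_n$. The only immaterial difference is that you re-derive the log-convexity from an integral representation, whereas the paper simply cites inequality (13) of \cite{MishuraRalchenkoSchilling2022}.
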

\begin{proof}
Coefficients $\{\Gamma_{n}^{k}\in\real,\ 2\le k\le n\}$ in the expansion \eqref{projectgamma1} are determined recursively in \cite[Proposition~5]{MishuraRalchenkoSchilling2022}. Namely,
\begin{gather}
 \Gamma_{n+1}^{n+1}=\frac{\rho_n-\sum_{k=2}^n\Gamma_n^k\rho_{n+1-k}}{1-\sum_{k=2}^n\Gamma_n^k\rho_{k-1}},\quad n\ge 2,
 \label{eq:Gamma-last}
 \\
 \Gamma_{n+1}^{k}=\Gamma_{n}^{k}-\Gamma_{n+1}^{n+1}\Gamma_{n}^{n-k+2}, \quad n\ge 2,\ 2\le k \le n.
 \label{eq:Gamma-k}
\end{gather}

The denominator in \eqref{eq:Gamma-last} is also strictly positive, as a determinant of covariance matrix.
Therefore, it is sufficient to prove that
$\rho_n-\sum_{k=2}^n\Gamma_n^k\rho_{n+1-k}>0$,
under assumption that $\Gamma_n^k>0$ for all $2\le k\le n$.

Multiplying both parts of \eqref{projectgamma1} by $\Delta_n$ and taking expectation, we get that
$$
\rho_{n-1}-\sum_{k=2}^n\Gamma_n^k\rho_{n-k}=0.
$$

Therefore, it is sufficient to prove that
$$
\delta_n:=\rho_{n-1}-\rho_n-\sum_{k=2}^n\Gamma_n^k(\rho_{n-k}-\rho_{n+1-k})<0.
$$

However,
$$
\delta_n=\rho_{n-1}\left(1-\frac{\rho_n}{\rho_{n-1}}\right)
-\sum_{k=2}^n\Gamma_n^k\rho_{n-k}\left(1-\frac{\rho_{n+1-k}}{\rho_{n-k}}\right),
$$
and taking to the account that $\Gamma_n^k>0$ and also $0<\rho_k<\rho_{k-1} \le 1$, $k\ge 1$ (see \cite[Corollary~1]{MishuraRalchenkoSchilling2022}), we see that it is sufficient to prove that
$$
1-\frac{\rho_n}{\rho_{n-1}}<1-\frac{\rho_{n+1-k}}{\rho_{n-k}}\quad \mbox{for }\quad 2\le k\le n.
$$

However, this relation is a direct consequence of inequality (13) from \cite{MishuraRalchenkoSchilling2022} which states that
the coefficients $\rho_k$ are log-convex, and so $$\frac{\rho_l}{\rho_{l-1}}<\frac{\rho_{l+1}}{\rho_l}.$$

Proposition is proved.
\end{proof}

\begin{remark}
 Since we already proved that for $n=4$ $\Gamma_n^k>0,\ 2\le k\le 4$, it means that $\Gamma_5^5>0$. However, to prove analytically that $\Gamma_5^4>0$ is a much more tedious problem than to prove that $\Gamma_4^3>0$ (both coefficients are ``penultimate''), therefore it is better to prove this fact numerically, see Figure~6 in \cite{MishuraRalchenkoSchilling2022}.
 The situation with the next coefficients is even more involved. This explains why to get the general result
 $\Gamma_n^k>0$, $n\ge2$, $2\le k\le n$, is indeed problematic.
\end{remark}

\subsection{``Martingale'' approach to the calculation of coefficients}
Obviously, the following conditional expectations are equal:
\begin{multline*}
 \ex (\Delta_1\mid\Delta_2,\ldots, \Delta_n)\\*
 =\ex (\Delta_1\mid\Delta_2,
 \Delta_3- \ex (\Delta_3\mid\Delta_2),
 \ldots,
 \Delta_k- \ex (\Delta_k\mid\Delta_2,\ldots,\Delta_{k-1}),
 \ldots, \Delta_n- \ex (\Delta_n\mid\Delta_2,\ldots,\Delta_{n-1})),
\end{multline*}
  all random variables in both conditions are Gaussian, and in the right-hand side they are non-correlated (orthogonal), therefore create a martingale according to the filtration $\mathcal{F}_k=\sigma\{\Delta_2,\ldots,\Delta_k\}$, and   they are even independent.
Consequently,
\begin{gather*}
 \ex (\Delta_1\mid\Delta_2,\ldots, \Delta_n)
 =\sum_{k=3}^{n}R_n^k(\Delta_k- \ex (\Delta_k\mid\Delta_2,\ldots,\Delta_{k-1}))+R_n^2\Delta_2,
\end{gather*}
for some coefficients $R_n^k\in\real, n\ge 2, 2\le k\le n$.

\begin{proposition}
 Coefficients $R_n^k$ do not depend on $n$ and equal
 \begin{gather}
  R_n^k=R_k=\frac{\rho_{k-1}-\sum_{i=2}^{k-1}\Gamma_{k-1}^i\rho_{k-i}}{1-\sum_{i=2}^{k-1}\Gamma_{k-1}^i\rho_{i}},\quad k\ge 3,\label{Rk}\\
  R_2=\rho_1.\label{R2}
 \end{gather}
\end{proposition}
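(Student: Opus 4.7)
The plan rests on one observation: by construction, the variables $\eta_k:=\Delta_k-\ex(\Delta_k\mid\Delta_2,\ldots,\Delta_{k-1})$, $k\ge3$, together with $\Delta_2$, form a pairwise orthogonal (hence, by joint Gaussianity, independent) system. The expansion stated immediately before the proposition is therefore a Fourier expansion in this orthogonal basis, so by uniqueness
\begin{gather*}
R_n^k=\frac{\ex[\Delta_1\eta_k]}{\ex[\eta_k^2]},\qquad 3\le k\le n,\qquad R_n^2=\frac{\ex[\Delta_1\Delta_2]}{\ex[\Delta_2^2]}.
\end{gather*}
Both right-hand sides depend only on the joint law of $\Delta_1,\ldots,\Delta_k$, which immediately gives that $R_n^k=R_k$ does not depend on $n$; and since $\ex\Delta_1\Delta_2=\rho_1$ and $\ex\Delta_2^2=1$, this already yields \eqref{R2}.

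The cleanest way to reach \eqref{Rk} is to identify $R_k$ with the last coefficient $\Gamma_k^k$ of the one-sided projection. Writing the same projection in two forms,
\begin{gather*}
\sum_{j=2}^{k}\Gamma_{k}^j\Delta_j=\ex(\Delta_1\mid\Delta_2,\ldots,\Delta_k)=R_2\Delta_2+\sum_{j=3}^{k}R_j\eta_j,
\end{gather*}
and comparing the coefficient of $\Delta_k$, one notes that on the right it appears only inside $\eta_k$ (with coefficient $1$) and on the left only in the $j=k$ summand; hence $R_k=\Gamma_k^k$ for every $k\ge3$. Substituting the recurrence \eqref{eq:Gamma-last} with $n$ replaced by $k-1$ then produces \eqref{Rk} essentially verbatim.

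As an alternative, more hands-on route, one can compute $\ex[\Delta_1\eta_k]$ and $\ex[\eta_k^2]$ directly. For this one uses stationarity of fGn to rewrite
\begin{gather*}
\ex(\Delta_k\mid\Delta_2,\ldots,\Delta_{k-1})=\sum_{i=2}^{k-1}\Gamma_{k-1}^{k-i+1}\Delta_i,
\end{gather*}
since this prediction problem is a unit time-shift of predicting $\Delta_{k-1}$ from $\Delta_1,\ldots,\Delta_{k-2}$, whose coefficients are displayed just after Conjecture~\ref{hyp:sol}. Expanding the two expectations via $\ex\Delta_i\Delta_j=\rho_{|i-j|}$ and then performing the change of summation index $j=k-i+1$ in each sum brings them into the shape of \eqref{Rk}. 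I expect no genuine obstacle here: orthogonality makes independence of $n$ automatic, and the remaining work is either a one-line coefficient comparison or a routine reindexing of two sums; the only point requiring care is getting the stationarity shift for $\hat\Delta_k$ right.
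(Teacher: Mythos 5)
Your proposal is correct and follows essentially the same route as the paper: orthogonality of the $\eta_k$ gives the Fourier-coefficient formula $R_n^k=\ex[\Delta_1\eta_k]/\ex[\eta_k^2]$ (whence independence of $n$ and \eqref{R2}), and stationarity converts the two expectations into the sums in \eqref{Rk}; your ``hands-on'' alternative is the paper's proof almost verbatim. Your primary shortcut --- reading off $R_k=\Gamma_k^k$ by comparing the coefficient of $\Delta_k$ in the two expansions and then invoking \eqref{eq:Gamma-last} --- is a slightly slicker variant that the paper does not state explicitly, but it is equivalent (uniqueness of the representation follows from non-degeneracy of the covariance matrix, which you should cite). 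One small point: both of your routes produce the denominator $1-\sum_{i=2}^{k-1}\Gamma_{k-1}^i\rho_{i-1}$, which agrees with the paper's own computation \eqref{Rkdenom} and with \eqref{eq:Gamma-last}, but not literally with the displayed formula \eqref{Rk}, whose $\rho_i$ appears to be an index typo rather than a flaw in your argument.
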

\begin{proof}
 It follows from the pairwise   orthogonality of the terms $\Delta_k- \ex (\Delta_k\mid\Delta_2,\ldots,\Delta_{k-1})$ that
 $$
 R_n^k=\frac{\ex\left(\ex (\Delta_1\mid\Delta_2,\ldots, \Delta_n)(\Delta_k- \ex (\Delta_k\mid\Delta_2,\ldots,\Delta_{k-1}))\right)}
 {\ex(\Delta_k- \ex (\Delta_k\mid\Delta_2,\ldots,\Delta_{k-1}))^2}.
 $$

 Recall also that
 $\ex (\Delta_1\mid\Delta_2,\ldots, \Delta_{k-1})
 =\sum_{i=2}^{k-1}\Gamma_{k-1}^i\Delta_i$. Taking into account stationarity of fGn, we can  rewrite the latter equality ``symmetrically'':
 $$
 \ex (\Delta_k\mid\Delta_2,\ldots, \Delta_{k-1})
 =\sum_{i=2}^{k-1}\Gamma_{k-1}^i\Delta_{k-i+1},
 $$
 and consequently, for any $k\ge 3$,
 \begin{align}
  \ex\bigl(\ex (\Delta_1\mid\Delta_2,\ldots, \Delta_n)(\Delta_k- \ex (\Delta_k\mid\Delta_2,\ldots,\Delta_{k-1}))\bigr)
  &=\ex \left(\Delta_1\left(\Delta_k-\sum_{i=2}^{k-1}
  \Gamma_{k-1}^i\Delta_{k-i+1}
  \right)\right)\nonumber\\
  \label{Rknum}
  &=\rho_{k-1}-\sum_{i=2}^{k-1}
  \Gamma_{k-1}^i\rho_{k-i}.
 \end{align}

 Furthermore,
\begin{gather}\label{Rkdenom}
 \ex(\Delta_k- \ex (\Delta_k\mid\Delta_2,\ldots,\Delta_{k-1}))^2
 =1-\ex\Delta_k\ex (\Delta_k\mid\Delta_2,\ldots, \Delta_{k-1})
 =1-\sum_{i=2}^{k-1}
  \Gamma_{k-1}^i\rho_{i-1}.
\end{gather}
Equalities \eqref{Rknum} and \eqref{Rkdenom} imply \eqref{Rk}. Equality \eqref{R2} is obvious.
\end{proof}

\begin{corollary}
 Again, it immediately follows from the orthogonality of summands that the $L_2$-norm of one-sided projection equals
 $$
 R_1(n):=\ex|\ex (\Delta_1\mid\Delta_2,\ldots, \Delta_{n})|^2=\sum_{k=2}^{n}R_k^2.
 $$
 Despite the simplicity of the form, this equality contains implicitly the coefficients $\Gamma_n^k$, so again it is not so unambiguous for calculations.
\end{corollary}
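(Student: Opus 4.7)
The plan is essentially a one-line Parseval computation. The preceding Proposition writes
\[
\ex(\Delta_1\mid\Delta_2,\ldots,\Delta_n)=R_2\Delta_2+\sum_{k=3}^n R_k\bigl(\Delta_k-\ex(\Delta_k\mid\Delta_2,\ldots,\Delta_{k-1})\bigr)
\]
as a linear combination of the pairwise orthogonal Gaussian innovations attached to the filtration $\mathcal{F}_k=\sigma(\Delta_2,\ldots,\Delta_k)$. Orthogonality of these summands was the very input used in the Proposition to isolate each $R_k$ by taking the inner product of the projection with the $k$-th innovation, so it is already in hand.

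Given this orthogonal decomposition, the argument is then mechanical: I would square both sides, take expectation, and observe that every cross-product $\ex\bigl[(\Delta_j-\ex(\Delta_j\mid\ldots))(\Delta_k-\ex(\Delta_k\mid\ldots))\bigr]$ vanishes by orthogonality, as does $\ex\bigl[\Delta_2(\Delta_k-\ex(\Delta_k\mid\ldots))\bigr]$. What remains is the sum of squared $L_2$-norms of the individual summands. Using $\ex\Delta_2^2=\rho_0=1$ for the first term and the innovation-variance identity \eqref{Rkdenom} for the terms with $k\ge 3$, the claimed formula $R_1(n)=\sum_{k=2}^n R_k^2$ falls out.

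There is no real obstacle beyond the Proposition itself; the Corollary is pure Pythagorean bookkeeping once the orthogonal decomposition is available. The substantive content lies not in the proof but in the second sentence of the statement: despite the compact appearance of $\sum_{k=2}^n R_k^2$, every $R_k$ still depends via \eqref{Rk} on the full array of projection coefficients $\{\Gamma_{k-1}^i\}$, so this identity yields no genuine computational shortcut over directly solving the linear system \eqref{eq:gamma-sys}.
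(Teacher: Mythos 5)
Your decomposition and the orthogonality argument are exactly what the paper has in mind (the paper itself offers nothing beyond ``it immediately follows from the orthogonality of summands''), but your last step does not go through as written. Squaring and killing the cross terms leaves you with the diagonal terms, and each diagonal term is $R_k^2$ multiplied by the variance of the corresponding summand. For $k=2$ that variance is $\ex\Delta_2^2=\rho_0=1$, fine; but for $k\ge3$ it is the innovation variance
\[
\ex\bigl(\Delta_k-\ex(\Delta_k\mid\Delta_2,\ldots,\Delta_{k-1})\bigr)^2
=1-\sum_{i=2}^{k-1}\Gamma_{k-1}^i\rho_{i-1},
\]
which is precisely what \eqref{Rkdenom} computes and which is strictly less than $1$. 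Invoking \eqref{Rkdenom} therefore works against you: it shows these variances do \emph{not} normalize to $1$, so what actually ``falls out'' of your Parseval computation is
\[
R_1(n)=R_2^2+\sum_{k=3}^nR_k^2\Bigl(1-\sum_{i=2}^{k-1}\Gamma_{k-1}^i\rho_{i-1}\Bigr)
=\sum_{k=3}^{n}R_k\Bigl(\rho_{k-1}-\sum_{i=2}^{k-1}\Gamma_{k-1}^i\rho_{k-i}\Bigr)+\rho_1^2,
\]
not $\sum_{k=2}^nR_k^2$.

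You can see the discrepancy already at $n=3$: there $R_3=(\rho_2-\rho_1^2)/(1-\rho_1^2)$, the Pythagorean sum gives $R_1(3)=\rho_1^2+(\rho_2-\rho_1^2)^2/(1-\rho_1^2)$ (which agrees with the Section~5 formula $\sum_k\Gamma_3^k\rho_{k-1}$), whereas $R_2^2+R_3^2=\rho_1^2+(\rho_2-\rho_1^2)^2/(1-\rho_1^2)^2$. The two differ whenever $\rho_2\ne\rho_1^2$. So either the summands must be renormalized to unit variance (in which case the coefficients are no longer the $R_k$ of \eqref{Rk} but $R_k$ times the innovation standard deviation) or the displayed identity must carry the variance factors. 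The gap is not merely one of exposition: asserting that the formula ``falls out'' hides the step at which the argument, done honestly, produces a different answer from the one claimed.
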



\section{Formulas for calculating coefficients of bilateral projection}\label{sec3}

In this section we proceed with two-sided (bilateral) projections of fractional Gaussian noise. Let us consider the following family of random variables, where each of them  symbolizes the  bilateral  projection of one element of fGn on the symmetric two-sided family of its elements:
\begin{gather*}
 H_n^{j}=\ex (\Delta_{n}\mid\Delta_{n-j},\ldots,\Delta_{n-1},\Delta_{n+1},\ldots,\Delta_{n+j}),\quad n\ge 2, \quad 1\le j\le n-1.
\end{gather*}

Due to the stationarity of fractional Gaussian noise  and the theorem of normal correlation (see, e.g. \cite[Theorem 13.1]{LiptserShiryaev2013} or \cite[Proposition 1.2]{MishuraZili2018}) we obtain the presentation
\begin{gather}\label{projectj}
H_n^{j}=\sum_{k=n-j, k\neq n}^{n+j}Q_{j}^{|n-k|}\Delta_k
=\sum_{k=1}^{j}Q_{j}^{k}\left(\Delta_{n-k}+\Delta_{n+k}\right),
\end{gather}
where $Q_{j}^{k}\in\real,\ 1\le k\le j\le n-1$ are the respective projection coefficients.

Note  that stationarity of fractional Gaussian noise also implies   that
$
Q_{j}^{k}=\hat{Q}_{j}^{k},
$
for all $1\le k\le j\le \min\set{n-1,m-1}$,
where
$$
 H_m^{j}=\sum_{k=m-j, k\neq m}^{m+j}\hat{Q}_{j}^{|m-k|}\Delta_k=\sum_{k=1}^{j}\hat{Q}_{j}^{k}\left(\Delta_{m-k}+\Delta_{m+k}\right).
$$

Let any $n\ge 2$ be fixed. Our aim is to find the coefficients $Q_{n-1}^{k}\in\real,\ 1\le k\le n-1$, of the decomposition
\begin{gather}\label{projectn}
H_n^{n-1}=\ex(\Delta_{n}\mid\Delta_{1},\ldots,\Delta_{n-1},\Delta_{n+1},\ldots,\Delta_{2n-1})
=\sum_{k=1, k\neq n}^{2n-1}Q_{n-1}^{|n-k|}\Delta_k.
\end{gather}

In order to realize this plan, we multiply both sides of \eqref{projectn} by
$\Delta_l$, for all $1\le l\le 2n-1, l\neq n$, and take the expectations. Obviously,
$$\ex(\ex(\Delta_{n}\mid\Delta_{1},\ldots,\Delta_{n-1},\Delta_{n+1},\ldots,\Delta_{2n-1})\Delta_l)=\rho_{|n-l|}.$$
As a result,  we obtain the following system of linear equations:
\begin{gather*}
 \rho_{|n-l|}=
 \sum_{k=1, k\neq n}^{2n-1}Q_{n-1}^{|n-k|}\rho_{|k-l|},
 \quad 1\le l\le 2n-1,\ l\neq n,
\end{gather*}
or  the following  equivalent system of linear equations:
\begin{gather}\label{systq}
 \rho_{n-l}=
 \sum_{k=1}^{n-1}Q_{n-1}^{n-k}\left(\rho_{|k-l|}+\rho_{|2n-k-l|}\right),
 \quad 1\le l\le n-1.
\end{gather}

We can rewrite these systems in matrix form. Accordingly, they will take on the form
\begin{gather}\label{vecteq}
 \overline{\rho}_{2n-2}=A_{2n-2}\overline{Q}_{2n-2},\quad
 \overline{\rho}_{n-1}^*=A_{n-1}^*\overline{Q}_{n-1}^*,
\end{gather}
where
\begin{gather}
 \overline{\rho}_{2n-2}=
 \begin{pmatrix}
        \rho_{n-1}\\
        \rho_{n-2}\\
        \vdots\\
        \rho_{1}\\
        \rho_{1}\\
        \vdots\\
        \rho_{n-2}\\
        \rho_{n-1}
\end{pmatrix},
\
\overline{Q}_{2n-2}=
 \begin{pmatrix}
        Q_{n-1}^{n-1}\\
        Q_{n-1}^{n-2}\\
        \vdots\\
        Q_{n-1}^{1}\\
        Q_{n-1}^{1}\\
        \vdots\\
        Q_{n-1}^{n-2}\\
        Q_{n-1}^{n-1}
\end{pmatrix},
\
\overline{\rho}_{n-1}^*=
 \begin{pmatrix}
        \rho_{n-1}\\
        \rho_{n-2}\\
        \vdots\\
        \rho_{1}
\end{pmatrix},
\
\overline{Q}_{n-1}^*=
 \begin{pmatrix}
        Q_{n-1}^{n-1}\\
        Q_{n-1}^{n-2}\\
        \vdots\\
        Q_{n-1}^{1}
\end{pmatrix},
\nonumber\\
A_{2n-2}=
\begin{pmatrix}
1 & \rho_1 & \cdots  & \rho_{n-2} & \rho_{n} & \rho_{n+1} & \cdots  &\rho_{2n-3} & \rho_{2n-2}\\
\rho_{1} & 1 & \cdots  & \rho_{n-3} & \rho_{n-1} & \rho_{n} & \cdots  &\rho_{2n-4} & \rho_{2n-3}\\
\vdots  & \vdots  & \ddots  & \vdots & \vdots & \vdots  & \ddots  & \vdots & \vdots\\
\rho_{2n-3} & \rho_{2n-4} & \cdots  & \rho_{n-1} & \rho_{n-2} & \rho_{n-3} & \cdots  & 1 & \rho_{1}\\
\rho_{2n-2} & \rho_{2n-3} & \cdots  & \rho_{n} & \rho_{n-1} & \rho_{n-2} & \cdots  &\rho_{1} & 1
\end{pmatrix},\nonumber\\
\label{matrA*n-1}
A_{n-1}^*=
\begin{pmatrix}
1+\rho_{2n-2} & \rho_{1}+\rho_{2n-3} & \cdots  & \rho_{n-3}+\rho_{n+1} & \rho_{n-2}+\rho_{n}\\
\rho_{1}+\rho_{2n-3} & 1+\rho_{2n-4} & \cdots  & \rho_{n-4}+\rho_{n} & \rho_{n-3}+\rho_{n-1}\\
\vdots  & \vdots  & \ddots  & \vdots & \vdots \\
\rho_{n-3}+\rho_{n+1} & \rho_{n-4}+\rho_{n} & \cdots  & 1+\rho_{4} & \rho_{1}+\rho_{3}\\
\rho_{n-2}+\rho_{n} & \rho_{n-3}+\rho_{n-1} & \cdots  & \rho_{1}+\rho_{3} & 1+\rho_{2}
\end{pmatrix}.
\end{gather}

\begin{theorem}\label{theor2.1}
 Coefficients $\{Q_{n-1}^{k}\in\real,\ 1\le k\le n-1\}$ can be calculated  using  the following formulas:

 1) For $n=2$  the unique respective coefficient equals
\begin{gather}\label{coef11}
 Q_1^1=\frac{\rho_{1}}{1+\rho_2}.
\end{gather}

 2) For $n\ge 3$ the respective coefficients equal
\begin{gather}
\label{Qn-1n-1}
 Q_{n-1}^{n-1}
 =\frac{\rho_{n-1}-\sum_{k=2, k\neq n}^{2n-1}G_{2n-1}^{k}\rho_{|n-k|}}{1-\sum_{k=2, k\neq n}^{2n-1}G_{2n-1}^{k}\rho_{k-1}},\\
 \label{Qn-1k}
 Q_{n-1}^{k}=Q_{n-2}^{k}-Q_{n-1}^{n-1}
 \left(T_{n-2}^{n-k} +T_{n-2}^{n+k}\right),\quad 1\le k\le n-2,
\end{gather}
where
\begin{gather}\label{G2n-1}
 G_{2n-1}^{k}=\Gamma_{2n-1}^{k}+\Gamma_{2n-1}^{n}S_{2n-1}^{k},\quad 2\le k \le 2n-1,\ k\neq n,
 \\
 \label{S2n-1}
 S_{2n-1}^{2n-1}
 =\frac{\rho_{n-1}-\sum_{k=2, k\neq n}^{2n-2}T_{n-2}^{2n-k}\rho_{|n-k|}}{1-\sum_{k=2, k\neq n}^{2n-2}T_{n-2}^{2n-k}\rho_{|2n-1-k|}},
 \\
 \label{Sk}
 S_{2n-1}^{k}=Q_{n-2}^{|n-k|}-S_{2n-1}^{2n-1}T_{n-2}^{2n-k},
 \quad 2\le k \le 2n-2,\ k\neq n,
 \\
 \label{Tn-2k}
 T_{n-2}^{k}=\Gamma_{2n-2}^{k}+\Gamma_{2n-2}^{n}Q_{n-2}^{|n-k|},
 \quad 2\le k \le 2n-2,\ k\neq n.
\end{gather}
\end{theorem}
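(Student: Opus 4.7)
\textbf{Overall strategy and case $\bm{n=2}$.} For $n=2$ the system \eqref{systq} reduces to the single equation $\rho_1 = (1+\rho_2) Q_1^1$, yielding \eqref{coef11} at once. For $n \ge 3$ the plan is to build the bilateral projection up by iterated Gram--Schmidt from the \emph{smaller} symmetric bilateral projection
\[
\ex(\Delta_n \mid \Delta_2, \dots, \Delta_{n-1}, \Delta_{n+1}, \dots, \Delta_{2n-2}) = \sum_{k=1}^{n-2} Q_{n-2}^{k}(\Delta_{n-k} + \Delta_{n+k}),
\]
interpreting each auxiliary quantity $T_{n-2}^{k}$, $S_{2n-1}^{k}$, $G_{2n-1}^{k}$ as the coefficients of an explicit conditional expectation. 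The identity \eqref{Qn-1k} will then be extracted cleanly via the reflection symmetry $k \mapsto 2n-k$, under which the fGn covariance is invariant and the target $\Delta_n$ is fixed.

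\textbf{Identifying the auxiliary projections.} I would first show that $T_{n-2}^{k}$ from \eqref{Tn-2k} are the coefficients of $\ex(\Delta_1 \mid \Delta_2, \dots, \Delta_{n-1}, \Delta_{n+1}, \dots, \Delta_{2n-2})$. Starting from the one-sided expansion $\ex(\Delta_1 \mid \Delta_2, \dots, \Delta_{2n-2}) = \sum_{k=2}^{2n-2} \Gamma_{2n-2}^{k}\Delta_k$ (known via \eqref{eq:Gamma-last}--\eqref{eq:Gamma-k}) and conditioning further on all its predictors \emph{except} $\Delta_n$, the tower property replaces $\Delta_n$ by its inner bilateral projection $\sum_{k\neq n} Q_{n-2}^{|n-k|}\Delta_k$. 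Collecting terms yields \eqref{Tn-2k}; by stationarity (the covariance is invariant under $i \mapsto 2n-i$), $T_{n-2}^{2n-k}$ are then the coefficients of the mirror projection $\ex(\Delta_{2n-1} \mid \text{same predictors})$. Next, adding $\Delta_{2n-1}$ to the inner set and performing one Gram--Schmidt step on $\ex(\Delta_n \mid \text{inner})$ gives \eqref{S2n-1}--\eqref{Sk}, identifying $S_{2n-1}^{k}$ with the coefficients of $\ex(\Delta_n \mid \Delta_2, \dots, \Delta_{n-1}, \Delta_{n+1}, \dots, \Delta_{2n-1})$. Applying the tower property to $\ex(\Delta_1 \mid \Delta_2, \dots, \Delta_{2n-1}) = \sum_{k=2}^{2n-1} \Gamma_{2n-1}^{k}\Delta_k$, now removing $\Delta_n$ and substituting the $S_{2n-1}^{k}$ just obtained, identifies $G_{2n-1}^{k}$ from \eqref{G2n-1} with the coefficients of $\ex(\Delta_1 \mid \Delta_2, \dots, \Delta_{n-1}, \Delta_{n+1}, \dots, \Delta_{2n-1})$.

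\textbf{Formulas \eqref{Qn-1n-1} and \eqref{Qn-1k}.} For \eqref{Qn-1n-1} I would add $\Delta_1$ \emph{last} to the full predictor set. The residual $\Delta_1 - \sum_{k\neq n} G_{2n-1}^{k}\Delta_k$ has variance $1 - \sum_{k\neq n} G_{2n-1}^{k}\rho_{k-1}$ and covariance $\rho_{n-1} - \sum_{k\neq n} G_{2n-1}^{k}\rho_{|n-k|}$ with $\Delta_n$, and its Gram--Schmidt coefficient is precisely the coefficient of $\Delta_1$ in $H_n^{n-1}$, namely $Q_{n-1}^{n-1}$, giving \eqref{Qn-1n-1}. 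For \eqref{Qn-1k} I invoke the reflection $k \mapsto 2n-k$: since the covariance is invariant and $\Delta_n$ is fixed, $H_n^{n-1}$ lies in the reflection-symmetric subspace of the predictor span. The inner set is itself reflection-invariant, while $\{\Delta_1, \Delta_{2n-1}\}$ contributes only the symmetric vector $\Delta_1 + \Delta_{2n-1}$, hence
\[
H_n^{n-1} = \sum_{k=1}^{n-2} Q_{n-2}^{k}(\Delta_{n-k}+\Delta_{n+k}) + \alpha\bigl[(\Delta_1+\Delta_{2n-1}) - \ex(\Delta_1+\Delta_{2n-1}\mid\text{inner})\bigr]
\]
for some scalar $\alpha$. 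Matching the coefficient of $\Delta_1$ forces $\alpha = Q_{n-1}^{n-1}$, and using $\ex(\Delta_1+\Delta_{2n-1}\mid\text{inner}) = \sum_{k'\neq n}(T_{n-2}^{k'}+T_{n-2}^{2n-k'})\Delta_{k'}$ from the first step, matching the coefficient of $\Delta_{n-k}$ delivers \eqref{Qn-1k}.

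\textbf{Main obstacle.} The principal difficulty is not any single step but the careful bookkeeping of the nested projections and the repeated use of stationarity to switch between $\Delta_1$ and $\Delta_{2n-1}$ as targets, and between $T_{n-2}^{k}$ and $T_{n-2}^{2n-k}$ as coefficients. The reflection-symmetry step is what produces the compact form \eqref{Qn-1k}; without it one would be stuck with the ``add $\Delta_1$ last'' identity $Q_{n-1}^{|n-k|} = S_{2n-1}^{k} - Q_{n-1}^{n-1} G_{2n-1}^{k}$, which does not visibly collapse to $Q_{n-2}^{k} - Q_{n-1}^{n-1}(T_{n-2}^{n-k}+T_{n-2}^{n+k})$ without a nontrivial algebraic detour.
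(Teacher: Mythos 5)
Your proposal is correct and follows essentially the same route as the paper: the same identifications of $T_{n-2}^{k}$, $S_{2n-1}^{k}$ and $G_{2n-1}^{k}$ as coefficients of the nested conditional expectations via the tower property and stationarity, and the same Gram--Schmidt residual $\Delta_1-\sum_{k\neq n}G_{2n-1}^{k}\Delta_k$ for \eqref{Qn-1n-1}. Your reflection-symmetry derivation of \eqref{Qn-1k} is just a repackaging of the paper's step of projecting $H_n^{n-1}$ down onto the inner predictor set (where it must coincide with $H_n^{n-2}$) and equating coefficients, so the two arguments are equivalent.
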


\begin{proof}
1) Let $n=2$. In this case,
the system \eqref{systq} is reduced to one equation, namely, to
\begin{gather*}
 \rho_{1}=Q_1^1(1+\rho_2),
\end{gather*}
and we immediately get \eqref{coef11}.

Moreover, $Q_1^1>0$ because  it was stated, e.g.\ in  \cite{MishuraRalchenkoSchilling2022}   inequalities (11), that for $H>1/2$
coefficients  $\rho_k$ are strictly decreasing, i.e.,
\begin{gather}\label{rhomonotest}
\rho_{k-1}>\rho_k>0,\;  \forall k\in\mathbb{N}.
\end{gather}

2) Let $n\ge 3$. On the one hand, it follows from equality  \eqref{projectj} that
\begin{gather}\label{Hnn-2}
H_n^{n-2}=\sum_{k=2, k\neq n}^{2n-2}Q_{n-2}^{|n-k|}\Delta_k.
\end{gather}
On the other hand, it follows from the tower property of conditional expectations and the inclusion of the corresponding     $\sigma$-algebras that
\begin{align}
 H_n^{n-2}
 &= \ex (\Delta_n\mid\Delta_{2},\ldots,\Delta_{n-1},\Delta_{n+1},\ldots,\Delta_{2n-2})\nonumber\\
 &=\ex (H_n^{n-1}\mid\Delta_{2},\ldots,\Delta_{n-1},\Delta_{n+1},\ldots,\Delta_{2n-2})\nonumber\\
 \label{Hnn-2M}
 &\stackrel{\mathclap{\eqref{projectn}}}{=}\;\sum_{k=2, k\neq n}^{2n-2}Q_{n-1}^{|n-k|}\Delta_k
 +Q_{n-1}^{n-1}(M_{n-2}^1+M_{n-2}^{2n-1}),
\end{align}
where
\begin{gather}
 M_{n-2}^1=\ex (\Delta_1\mid\Delta_{2},\ldots,\Delta_{n-1},\Delta_{n+1},\ldots,\Delta_{2n-2})=\sum_{k=2, k\neq n}^{2n-2}T_{n-2}^{k}\Delta_k,\nonumber\\
 \label{M2n-1}
 M_{n-2}^{2n-1}=\ex (\Delta_{2n-1}\mid\Delta_{2},\ldots,\Delta_{n-1},\Delta_{n+1},\ldots,\Delta_{2n-2})=\sum_{k=2, k\neq n}^{2n-2}\widetilde{T}_{n-2}^{k}\Delta_k,
\end{gather}
for some coefficients $T_{n-2}^{k},\ \widetilde{T}_{n-2}^{k}\in \real$ (we will define them during the proof).
Due to stationarity of the increments, $\widetilde{T}_{n-2}^{k}=T_{n-2}^{2n-k},\ 2\le k \le 2n-2$.

Again, due to tower property of conditional expectations,
\begin{equation}\begin{split}\label{towertwo}
  M_{n-2}^1
  &=\ex (\ex (\Delta_1\mid\Delta_{2},\ldots,\Delta_{n-1},\Delta_{n},\Delta_{n+1},\ldots,\Delta_{2n-2})\mid\Delta_{2},\ldots,\Delta_{n-1},\Delta_{n+1},\ldots,\Delta_{2n-2})\\
  &\stackrel{\mathclap{\eqref{projectgamma1}}}{=} \;\,
  \ex \left(\sum_{k=2}^{2n-2}\Gamma_{n}^k\Delta_k\biggm|\Delta_{2},\ldots,\Delta_{n-1},\Delta_{n+1},\ldots,\Delta_{2n-2}\right)\\
  &=\sum_{k=2, k\neq n}^{2n-2}\Gamma_{2n-2}^k\Delta_k
  +\Gamma_{2n-2}^nH_n^{n-2}\\
  &=\sum_{k=2, k\neq n}^{2n-2}\Gamma_{2n-2}^k\Delta_k
  +\Gamma_{2n-2}^n\sum_{k=2, k\neq n}^{2n-2}Q_{n-2}^{|n-k|}\Delta_k=\sum_{k=2, k\neq n}^{2n-2}T_{n-2}^{k}\Delta_k.
\end{split}\end{equation}

Equating the coefficients at $\Delta_k$ in the last equality in \eqref{towertwo}, we get \eqref{Tn-2k}.
Thus, taking to account \eqref{Hnn-2} and  \eqref{Hnn-2M}, we can conclude that
\begin{gather*}
 Q_{n-2}^{|n-k|}=Q_{n-1}^{|n-k|}
 +Q_{n-1}^{n-1}\left(T_{n-2}^{k}+T_{n-2}^{2n-k}\right),
 \quad 2\le k \le 2n-2,\ k\neq n,
\end{gather*}
which leads to \eqref{Qn-1k} via the relations
$$
|n-k|=|n-(2n-k)|,\quad
1\le |n-k|\le n-2,\ \mbox{ for }\ 2\le k \le 2n-2,\ k\neq n.
$$

Now we shall calculate  $Q_{n-1}^{n-1}$. For this purpose, let us present the projection $H_n^{n-1}$ in the following form:
\begin{align}
 H_n^{n-1}&=Q_{n-1}^{n-1}\Delta_1+\ldots+Q_{n-1}^{1}\Delta_{n-1}+Q_{n-1}^{1}\Delta_{n+1}+\ldots+Q_{n-1}^{n-1}\Delta_{2n-1}
 \nonumber\\
 \label{Hnn-1D1}
 &=Q_{n-1}^{n-1}I_1
 +\sum_{k=2, k\neq n}^{2n-1}\widetilde{Q}_{n-1}^k\Delta_k.
\end{align}
Here $\widetilde{Q}_{n-1}^k\in \real$, $2\le k \le 2n-1$, $k\neq n$ are some coefficients, and the random variable $I_1$ equals
$$I_1=\Delta_1-\ex \left(\Delta_1\mid\Delta_{2},\ldots,\Delta_{n-1},\Delta_{n+1},\ldots,\Delta_{2n-1}\right).
$$

Multiply both sides of \eqref{Hnn-1D1} by
$I_1$ and take the expectations. Since
\begin{gather}\label{CovI1Dk}
\Cov {\left(I_1,\Delta_k\right)}
=\ex \Delta_1\Delta_k
-\ex \left(\Delta_k\ex \left(\Delta_1\mid\Delta_{2},\ldots,\Delta_{n-1},\Delta_{n+1},\ldots,\Delta_{2n-1}\right)\right)=0,
\end{gather}
for all $2\le k \le 2n-1, k\neq n$,
we get
$$
\ex \left(I_1\sum_{k=2, k\neq n}^{2n-1}\widetilde{Q}_{n-1}^k\Delta_k\right)=0.
$$

Moreover, due to the fact that
$$
\ex (H_n^{n-1} I_1)
=\ex \left(\ex \left(\Delta_n I_1\mid\Delta_{1},\ldots,\Delta_{n-1},\Delta_{n+1},\ldots,\Delta_{2n-1}\right)\right)
=\ex (\Delta_n I_1),
$$
we obtain from \eqref{Hnn-1D1}:
\begin{gather}\label{EDnI1}
 \ex (\Delta_n I_1)=Q_{n-1}^{n-1}\ex (I_1)^2.
\end{gather}

Thus,
\begin{gather}\label{Qn-1n-1ED}
 Q_{n-1}^{n-1}
 =\frac{\ex (\Delta_n \Delta_1-\Delta_n\ex \left(\Delta_1\mid\Delta_{2},\ldots,\Delta_{n-1},\Delta_{n+1},\ldots,\Delta_{2n-1}\right))}{\ex (\Delta_1-\ex \left(\Delta_1\mid\Delta_{2},\ldots,\Delta_{n-1},\Delta_{n+1},\ldots,\Delta_{2n-1}\right))^2},
\end{gather}
and our next step is to determine $\ex \left(\Delta_1\mid\Delta_{2},\ldots,\Delta_{n-1},\Delta_{n+1},\ldots,\Delta_{2n-1}\right)$.

By \eqref{projectgamma1},
\begin{gather*}
 \ex \left(\Delta_1\mid\Delta_{2},\ldots,\Delta_{n-1},\Delta_{n},\Delta_{n+1},\ldots,\Delta_{2n-1}\right)
 =\sum_{k=2}^{2n-1}\Gamma_{2n-1}^k\Delta_k.
\end{gather*}
Hence, similarly to the calculation of $M_{n-2}^1$ in \eqref{towertwo}, we obtain
\begin{align}
\MoveEqLeft
 \ex \left(\Delta_1\mid\Delta_{2},\ldots,\Delta_{n-1},\Delta_{n+1},\ldots,\Delta_{2n-1}\right)\nonumber\\
 \label{ED1n}
 &=\sum_{k=2, k\neq n}^{2n-1}\Gamma_{2n-1}^k\Delta_k
 +\Gamma_{2n-1}^n \ex \left(\Delta_n\mid\Delta_{2},\ldots,\Delta_{n-1},\Delta_{n+1},\ldots,\Delta_{2n-1}\right),
\end{align}
and now we need to determine the last conditional expectation in \eqref{ED1n}.

Using the same reasoning as in \eqref{Hnn-1D1}, we get
\begin{align}
\MoveEqLeft \ex \left(\Delta_n\mid\Delta_{2},\ldots,\Delta_{n-1},\Delta_{n+1},\ldots,\Delta_{2n-1}\right)
 =\sum_{k=2, k\neq n}^{2n-1}S_{2n-1}^k\Delta_k\nonumber\\
 \label{ED1S}
 &=S_{2n-1}^{2n-1}J_{2n-1}
 +\sum_{k=2, k\neq n}^{2n-2}\widetilde{S}_{2n-1}^k\Delta_k,
\end{align}
where $\widetilde{S}_{2n-1}^k\in\real$, $2\le k \le 2n-2$, $k\neq n$ are some coefficients, and
$$J_{2n-1}=\Delta_{2n-1}
 - \ex \left(\Delta_{2n-1}\mid\Delta_{2},\ldots,\Delta_{n-1},\Delta_{n+1},\ldots,\Delta_{2n-2}\right)
 =\Delta_{2n-1}-M_{n-2}^{2n-1}.
$$
Multiplying both sides of \eqref{ED1S} by
$J_{2n-1}$ and taking the expectations, we arrive at
\begin{gather}\label{EDJ}
 \ex (\Delta_n J_{2n-1})=S_{2n-1}^{2n-1} \ex (J_{2n-1})^2.
\end{gather}

To continue, we substitute \eqref{M2n-1} in \eqref{EDJ} and  obtain the equalities
\begin{gather*}
 S_{2n-1}^{2n-1}=\frac{\ex (\Delta_n\Delta_{2n-1}-\Delta_n\sum_{k=2, k\neq n}^{2n-2}T_{n-2}^{2n-k}\Delta_k)}{\ex (J_{2n-1})^2}
 =\frac{\rho_{n-1}-\sum_{k=2, k\neq n}^{2n-2}T_{n-2}^{2n-k}\rho_{|n-k|}}{1-\sum_{k=2, k\neq n}^{2n-2}T_{n-2}^{2n-k}\rho_{|2n-1-k|}}.
\end{gather*}
With the same reasonings that allowed us to  get \eqref{EDnI1}, we can use  the orthogonality of the random variables $\Delta_k$ and $J_{2n-1}$ for all $2\le k \le 2n-1$, $k\neq n$.  With this at hand, we get that
\begin{align*}
 \ex (J_{2n-1})^2
 &=\ex \left[
 \bigg(\Delta_{2n-1}
 -\sum_{k=2, k\neq n}^{2n-2}T_{n-2}^{2n-k}\Delta_k
 \bigg)^2
 \right]\\
 &=\ex \left[\Delta_{2n-1}
 \bigg(\Delta_{2n-1}
 -\sum_{k=2, k\neq n}^{2n-2}T_{n-2}^{2n-k}\Delta_k
 \bigg)\right]\\
 &\quad- \sum_{k=2, k\neq n}^{2n-2}T_{n-2}^{2n-k}\ex \left[\Delta_k
 \bigg(\Delta_{2n-1}
 -\sum_{k=2, k\neq n}^{2n-2}T_{n-2}^{2n-k}\Delta_k
 \bigg)\right],
\end{align*}
where the last term equals to 0. Thus, we have proved equality \eqref{S2n-1}.

Further, let us determine coefficients $S_{2n-1}^{k}$, $2\le k\le 2n-2$ from \eqref{ED1S}.
It follows from  the tower property of conditional expectations  and the first equality of \eqref{ED1S} that
\begin{align*}
 H_n^{n-2}
 &=\ex \left(\ex \left(\Delta_n\mid\Delta_{2},\ldots,\Delta_{n-1},\Delta_{n+1},\ldots,\Delta_{2n-1}\right)\mid\Delta_{2},\ldots,\Delta_{n-1},\Delta_{n+1},\ldots,\Delta_{2n-2}\right)
 \\
 &=\ex \left(\sum_{k=2, k\neq n}^{2n-1}S_{2n-1}^k\Delta_k\biggm|\Delta_{2},\ldots,\Delta_{n-1},\Delta_{n+1},\ldots,\Delta_{2n-2}\right)\\
 &=\sum_{k=2, k\neq n}^{2n-2}S_{2n-1}^k\Delta_k+S_{2n-1}^{2n-1}M_{n-2}^{2n-1}\\
 &\stackrel{\mathclap{\eqref{M2n-1}}}{=}\;
 \sum_{k=2, k\neq n}^{2n-2}S_{2n-1}^k\Delta_k
 +\sum_{k=2, k\neq n}^{2n-2}S_{2n-1}^{2n-1}T_{n-2}^{2n-k}\Delta_k
 \stackrel{\eqref{Hnn-2}}{=}\sum_{k=2, k\neq n}^{2n-2}Q_{n-2}^{|n-k|}\Delta_k.
\end{align*}

Equating the coefficients at $\Delta_k,\ 2\le k \le 2n-2, k\neq n,$ in the last relation, we establish \eqref{Sk}.

Consiquently, we have that
\begin{align}
\MoveEqLeft
 \ex \left(\Delta_1\mid\Delta_{2},\ldots,\Delta_{n-1},\Delta_{n+1},\ldots,\Delta_{2n-1}\right)\nonumber\\
 \label{ED1G}
 &=\sum_{k=2, k\neq n}^{2n-1}\Gamma_{2n-1}^k\Delta_k
 +\Gamma_{2n-1}^n \sum_{k=2, k\neq n}^{2n-1}S_{2n-1}^k\Delta_k
 =\sum_{k=2, k\neq n}^{2n-1}G_{2n-1}^k\Delta_k,
\end{align}
where coefficients $G_{2n-1}^k$ defined by \eqref{G2n-1}.

Substituting \eqref{ED1G} into \eqref{Qn-1n-1ED} and applying of  \eqref{CovI1Dk} leads us to \eqref{Qn-1n-1}, which completes the proof of the theorem.
\end{proof}

\section{Calculation of bilateral projection coefficients for  \texorpdfstring{$\bm{n=2,3,4}$}{n=2,3,4}}\label{sec4}
It is obvious that in the general case Theorem \ref{theor2.1} gives formulas for calculating projection coefficients in a rather complicated form, and the possibility of simplifying them does not seem realistic.  Therefore it is interesting to consider particular cases, when the formulas  for  the projection coefficients have comparatively simple and observable form. So, let us look at some special cases, namely, consider subsequently the values  $n=2,3,4$, and calculate projection coefficients $Q_{n-1}^k$ from \eqref{projectn}, using system~\eqref{systq} and respective matrix equation~\eqref{vecteq}. To understand the behaviour of coefficients as the functions of $H$, we recall that in the ultimate case $H=1$ fractional Brownian motion $B_t^1=\xi t$, $t\ge 0$, where $\xi$ is a standard normal variable, whence all $\Delta_k$ equals $\xi$ and consequently all $\rho_k$ equal 1.

To solve \eqref{systq} we use Cramer's rule, namely,
\begin{gather}\label{QCramer}
 Q_{n-1}^{k}=\frac{D_{n-1,k}^*}{D_{n-1}^*}, 
\end{gather}
where $D_{n-1,k}^*=\det(A_{n-1,k}^*),\ D_{n-1}^*=\det(A_{n-1}^*)$, matrix $A_{n-1}^*$ is defined by \eqref{matrA*n-1} and $A_{n-1,k}^*$ is the matrix $A_{n-1}^*$ with its $(n-k)$th column vector replaced by $\overline{\rho}_{n-1}^*$ from equation~\eqref{vecteq}. We illustrate each case with the corresponding graphs.

\subsection{Case \texorpdfstring{$\bm{n=2}$}{n=2}} Recall that according to item 1) from Theorem \ref{theor2.1}, in this case
we have the unique coefficient $Q_1^1$, and it equals
\begin{gather*}
 Q_1^1=\frac{\rho_{1}}{1+\rho_2}.
\end{gather*}
As expected, we see on the Figure \ref{fig:Q1} that the coefficient $ Q_1^1$ increases from zero to $\frac12$ as $H$ increases from $\frac12$ to 1. Moreover, it is a convex upward function of $H$.

\begin{figure}[ht!]
 \includegraphics[scale=0.75]{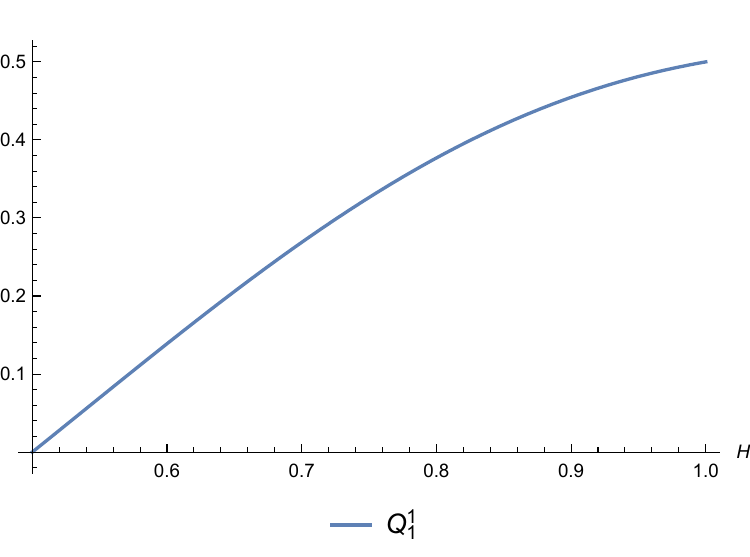}
 \caption{Graph of $Q_1^1$ as a function of $H$.}\label{fig:Q1}
\end{figure}

\subsection{Case \texorpdfstring{$\bm{n=3}$}{n=3}} In this case the determinants of the system \eqref{systq} equal
\begin{gather*}
 D_{2}^*=(1+\rho_2)(1+\rho_4)-(\rho_1+\rho_3)^2,\\
 D_{2,1}^*=\rho_1(1+\rho_4)-\rho_2(\rho_1+\rho_3),\quad
 D_{2,2}^*=\rho_2(1+\rho_2)-\rho_1(\rho_1+\rho_3),
\end{gather*}

and we obtain the following values of the coefficients:
\begin{gather*}
Q_2^1=\dfrac{\rho_1(1+\rho_4)-\rho_2(\rho_1+\rho_3)}{(1+\rho_2)(1+\rho_4)-(\rho_1+\rho_3)^2},
\quad
Q_2^2=\dfrac{\rho_2(1+\rho_2)-\rho_1(\rho_1+\rho_3)}{(1+\rho_2)(1+\rho_4)-(\rho_1+\rho_3)^2}.
\end{gather*}

Let us show that coefficients $Q_2^1,\ Q_2^2$ are strictly positive for all $H\in(1/2,1)$.

By \eqref{rhomonotest}, $1 > \rho_1 > \rho_2 > \rho_3$. Thus,
\begin{gather}\label{rhoest1}
 1+\rho_2>\rho_1+\rho_3.
\end{gather}

Also, due to inequality  (12) from \cite{MishuraRalchenkoSchilling2022}, $\rho_{k-1}-\rho_{k}>\rho_{k}-\rho_{k+1}$, whence
\begin{gather}\label{rhoest2}
 1-\rho_1>\rho_{3}-\rho_4 \quad\Leftrightarrow\quad
1+\rho_4>\rho_{1}+\rho_3.
\end{gather}

Taking \eqref{rhoest1} and the right-hand side of \eqref{rhoest2} into account,  we immediately get that $D_{2}^*>0$.
Moreover,  taking again the right-hand side of  inequality~\eqref{rhoest2} and relation $\rho_1>\rho_2$ into account, we obtain that $$\rho_1(1+\rho_4)>\rho_2(\rho_{1}+\rho_3),$$ i.e., $D_{2,1}^*>0$  and consequently $Q_2^1>0$.
Furthermore, by inequality~(24) from \cite{MishuraRalchenkoSchilling2022},
$\rho_2(1+\rho_2)>\rho_1(\rho_1+\rho_3)$. Hence, $D_{2,2}^*>0$, and $Q_2^2>0$.
As we can see from Figure \ref{fig:Q2}, coefficient $Q_2^1$ is strictly increasing and convex upward function of $H$, while $Q_2^2$ is also convex upward but changes its monotonicity. The maximum value of $Q_2^2$ is attained at the point $H \approx 0.7807$ and equals $0.0733648$.
The limits of $Q_2^1$ and $Q_2^2$ as $H\uparrow1$ are equal to $0.459546$ and $0.040454$ respectively, and the sum of the limits equals $\frac12.$
\begin{figure}[ht!]
 \includegraphics[scale=0.75]{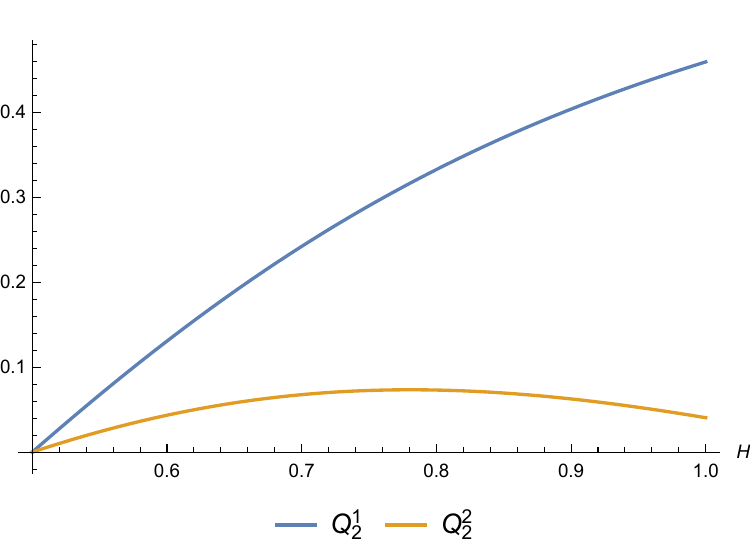}
 \caption{Graphs of $Q_2^1$ and $Q_2^2$ as functions of $H$.}\label{fig:Q2}
\end{figure}

\subsection{Case \texorpdfstring{$\bm{n=4}$}{n=4}}
In this case the covariance matrix has a form
\begin{gather*}
 A_{3}^*=
 \begin{pmatrix}
1+\rho_{6} & \rho_{1}+\rho_{5} & \rho_{2}+\rho_{4}\\
\rho_{1}+\rho_{5} & 1+\rho_{4} & \rho_{1}+\rho_{3}\\
\rho_{2}+\rho_{4} & \rho_{1}+\rho_{3} & 1+\rho_{2}
\end{pmatrix}.
\end{gather*}

Thus,
\begin{align*}
 D_{3}^*&=(1+\rho_6)(1+\rho_4)(1+\rho_2)-(1+\rho_6)(\rho_1+\rho_3)^2-(1+\rho_4)(\rho_2+\rho_4)^2-(1+\rho_2)(\rho_1+\rho_5)^2\\*
 &\quad+2(\rho_1+\rho_3)(\rho_2+\rho_4)(\rho_1+\rho_5),\\
  D_{3,1}^*&=(1+\rho_6)
  \big(\rho_1(1+\rho_4)-\rho_2(\rho_1+\rho_3)\big)
  -(\rho_1+\rho_5)
  \big(\rho_1(\rho_1+\rho_5)-\rho_3(\rho_1+\rho_3)\big)\\
  &\quad+(\rho_2+\rho_4)
  \big(\rho_2(\rho_1+\rho_5)-\rho_3(1+\rho_4)\big),\\
   D_{3,2}^*&=(1+\rho_6)
  \big(\rho_2(1+\rho_2)-\rho_1(\rho_1+\rho_3)\big)
  -(\rho_1+\rho_5)
  \big(\rho_3(1+\rho_2)-\rho_1(\rho_2+\rho_4)\big)\\
  &\quad+(\rho_2+\rho_4)
  \big(\rho_3(\rho_1+\rho_3)-\rho_2(\rho_2+\rho_4)\big),\\
   D_{3,3}^*&=\rho_3
  \big((1+\rho_4)(1+\rho_2)-(\rho_1+\rho_3)^2\big)
  -(\rho_1+\rho_5)
  \big(\rho_2(1+\rho_2)-\rho_1(\rho_1+\rho_3)\big)\\
  &\quad+(\rho_2+\rho_4)
  \big(\rho_2(\rho_1+\rho_3)-\rho_1(1+\rho_4)\big),
\end{align*}
and coefficients $Q_{3}^k$, $1\le k \le 3$, can be calculated by formula \eqref{QCramer}.

 It was proved in \cite[Theorem~1.1]{bamish}  that the covariance matrices of fractional Brownian motion and fractional Gaussian noise are non-degenerate. Therefore, $D_{3}^*>0$.
Let us  try to investigate the sign of $D_{3,1}^*$.
Obviously, $D_{3,1}^*$ starts and finishes with zero values. Furthermore, while the analysis of the determinant  $D_{3,1}^*$ is difficult, and simple comparisons of the coefficients, as was done for $n=2$, seems to be impossible, it is very easy to analyze it numerically, and the result is presented at Figure \ref{fig:D31}. We see that $D_{3,1}^*$ is strictly positive between $H=\frac12$ and $H=1$.
The determinant $D^*_{3,3}$ can be analyzed numerically in a similar manner. We mention that it also remains positive for $\frac12 < H < 1$, see Figure \ref{fig:D33}.

\begin{figure}[ht]
\centering
\subcaptionbox{$D_{3,1}^*$\label{fig:D31}}
{\includegraphics[scale=0.6]{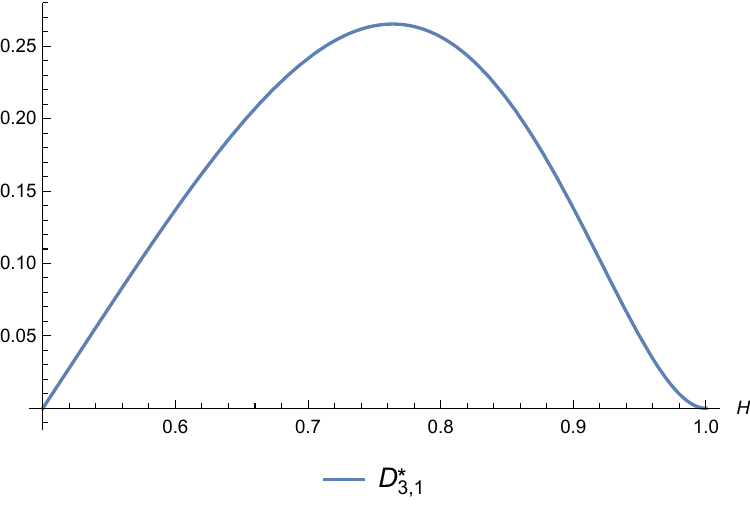}}%
\subcaptionbox{$D_{3,3}^*$\label{fig:D33}}{\includegraphics[scale=0.6]{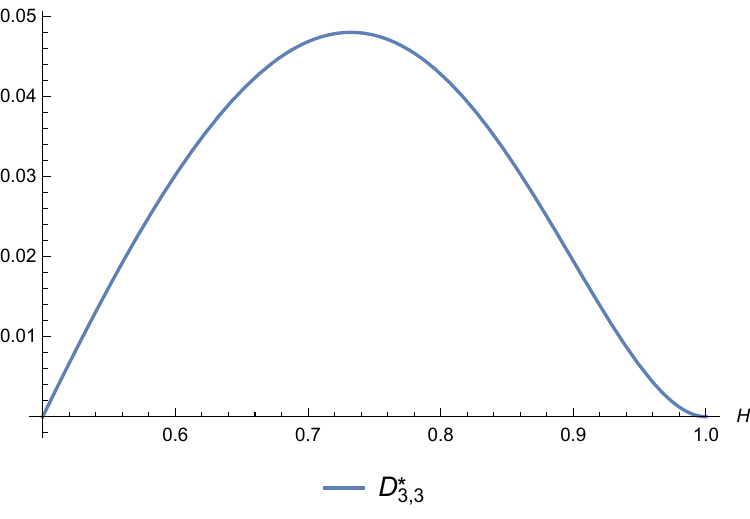}}
 \caption{Graphs of $D_{3,1}^*$ and $D_{3,3}^*$ as functions of $H$}\label{fig:D31_D33}
\end{figure}

Now, let us investigate the determinant $D_{3,2}^*$. Surprisingly, Figure \ref{fig:D32} shows that $D_{3,2}^*$ becomes negative for values of
$H$ close to one, starting from approximately 0.99300. However, the corresponding negative values of $D_{3,2}^*$ have extremely small absolute values: the minimum value is
 $-4.844\cdot10^{-7}$.
To ensure this phenomenon is not caused by numerical calculation errors, we will analyze the behavior of $D_{3,2}^*$  as a function of
$H$ in more detail. The next proposition analytically shows that $D_{3,2}^*$ is indeed negative in a left neighborhood of~1.

\begin{figure}
\centering
\subcaptionbox{$H\in(0.5,1)$}
{\includegraphics[scale=0.59]{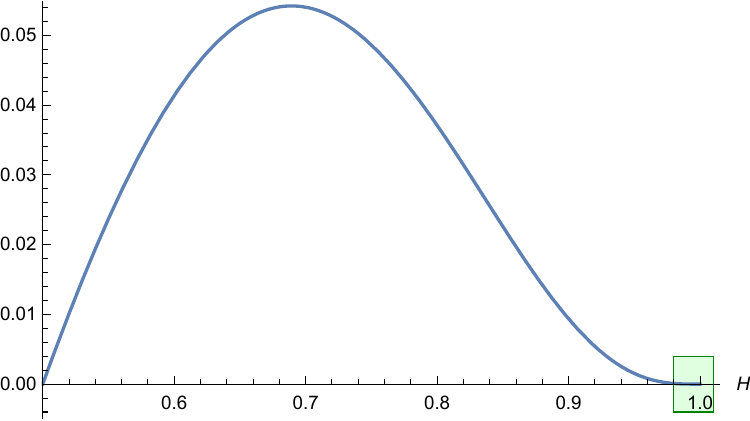}}%
\ \ 
\subcaptionbox{$H\in(0.99,1)$}{\includegraphics[scale=0.64]{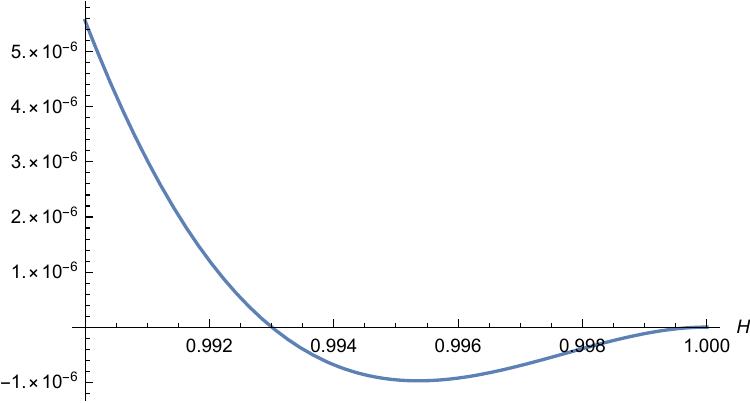}}
 \caption{Graph of $D_{3,2}^*$ as a function of $H$}\label{fig:D32}
\end{figure}

\begin{proposition}
There exists some $\delta\in(0,1)$ such that $D_{3,2}^* < 0$ for all $H\in(\delta,1)$.
\end{proposition}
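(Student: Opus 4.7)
The plan is to study the asymptotic behaviour of $D_{3,2}^*$ as $H \uparrow 1$. Since $\rho_k(H) = \frac12\bigl((k+1)^{2H} - 2k^{2H} + (k-1)^{2H}\bigr)$, a direct substitution gives $\rho_k(1) = 1$ for every $k \ge 1$; consequently each of the three $2 \times 2$ cofactor expressions $\rho_2(1+\rho_2) - \rho_1(\rho_1 + \rho_3)$, $\rho_3(1+\rho_2) - \rho_1(\rho_2 + \rho_4)$, and $\rho_3(\rho_1 + \rho_3) - \rho_2(\rho_2 + \rho_4)$ occurring in $D_{3,2}^*$ vanishes at $H = 1$, so $D_{3,2}^*(1) = 0$. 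Setting $\varepsilon = 1 - H$, it is therefore enough to identify the first non-zero Taylor coefficient of the real-analytic function $H \mapsto D_{3,2}^*(H)$ at $H = 1$ and to verify that its sign forces $D_{3,2}^*(H) < 0$ for small $\varepsilon > 0$; continuity on $(1/2, 1)$ will then supply the required $\delta$.

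Concretely, I would differentiate the explicit formula for $\rho_k$ to produce $a_k := \rho_k'(1)$, namely $a_k = (k+1)^2 \ln(k+1) - 2k^2 \ln k + (k-1)^2 \ln(k-1)$ for $k\ge 2$ and $a_1 = 4\ln 2$, together with analogous closed-form expressions for $b_k := \rho_k''(1)$ if they are needed, and then expand $\rho_k = 1 - a_k \varepsilon + \frac12 b_k \varepsilon^2 + O(\varepsilon^3)$. Substituting into each bracketed minor yields a linear combination of the $a_k$ at order $\varepsilon$ (and of the $a_k^2$, $b_k$ at order $\varepsilon^2$); multiplying by the three cofactor prefactors $1 + \rho_6$, $\rho_1 + \rho_5$, $\rho_2 + \rho_4$, each equal to $2 + O(\varepsilon)$, and combining with the alternating signs of the Laplace expansion produces the Taylor coefficients $c_j$ of $\varepsilon^j$ in $D_{3,2}^*$. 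The statement then reduces either to the inequality $c_1 > 0$ (so that $D_{3,2}^* \sim -c_1\varepsilon$ and is negative for small $\varepsilon > 0$) or, should $c_1$ vanish, to $c_2 < 0$.

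The main obstacle is precisely this bookkeeping: because each of the three minors vanishes individually at $H = 1$, delicate cancellations among the $a_k$ must be tracked carefully, and the numerical evidence that the minimum of $D_{3,2}^*$ is only of magnitude $\sim 5\cdot 10^{-7}$ near $H \approx 0.993$ makes it plausible that $c_1 = 0$ and one is pushed down to the second-order terms involving the $b_k$. A potentially cleaner organisation of the calculation, worth considering if the brute-force expansion proves unwieldy, is to note that $A_3^*(1) = 2J$, where $J$ is the $3 \times 3$ all-ones matrix of rank one; the cofactor expressions of $D_{3,2}^*$ can then be read off as the first-order perturbation of a rank-one matrix, which both explains the vanishing at $H = 1$ and isolates the leading contribution. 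Once the sign of the first non-vanishing coefficient is confirmed, the existence of $\delta \in (0, 1)$ with $D_{3,2}^*(H) < 0$ on $(\delta, 1)$ is immediate from continuity of $D_{3,2}^*$.
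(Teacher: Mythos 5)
Your road map is exactly the one the paper follows: observe that $D_{3,2}^*$ and its first derivative vanish at $H=1$, show that the second derivative has a strictly negative limit as $H\uparrow 1$, and conclude from the Taylor expansion (equivalently, from monotonicity of $\partial_H D_{3,2}^*$ near $1$) that $D_{3,2}^*<0$ on some $(\delta,1)$. The zeroth- and first-order vanishing are handled correctly, and your observation that $A_3^*(1)=2J$ with $J$ of rank one is precisely the structural reason the paper's first-derivative limit dies: each term of $\partial_H D_{3,2}^*$ is a determinant that still contains two proportional columns at $H=1$. Your formula $a_k=(k+1)^2\ln(k+1)-2k^2\ln k+(k-1)^2\ln(k-1)$ also agrees with the paper's $\hat\rho_k'$. (A small internal slip: having declared $c_j$ to be the coefficient of $\varepsilon^j=(1-H)^j$, negativity at first order would require $c_1<0$, not $c_1>0$; this is immaterial since $c_1=0$ anyway.)

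The genuine gap is that the proposal stops exactly where the decisive work begins. Everything hinges on the sign of the second-order coefficient, and you only assert that it is ``plausible'' that one is pushed to second order and never establish that the resulting coefficient is negative. This cannot be waved through: the limit of $\partial_{H\!H}D_{3,2}^*$ at $H=1$ is a specific quadratic form in the six numbers $\hat\rho_1',\dots,\hat\rho_6'$ (explicit combinations of $\log 2,\dots,\log 7$), and the paper must expand the three surviving second-order determinants, collect the quadratic form
\[
6 \hat\rho_1' \hat\rho_2' - 4 \hat\rho_1' \hat\rho_3' + 8 \hat\rho_1' \hat\rho_4' + \dots + 2 \hat\rho_4' \hat\rho_5',
\]
and evaluate it to obtain $\approx -0.277226<0$. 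A priori the sign could have come out positive (in which case the proposition would be false as stated) or zero (in which case one would have to go to third order), so without carrying out this computation, or some equivalent argument pinning down the sign, the proposition is not proved. Right strategy, but the destination is not reached.
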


\begin{proof}
The proof is based on analyzing the first and second derivatives of $D_{3,2}^*$ at $H=1$.
We shall show that
\begin{equation}\label{eq:D32deriv}
\lim_{H\uparrow1} D_{3,2}^* = 0,\quad
\lim_{H\uparrow1} \partial_H D_{3,2}^* = 0, \quad
\lim_{H\uparrow1}  \partial_{H\!H} D_{3,2}^* < 0.
\end{equation}
as illustrated in Figures \ref{fig:D32}--\ref{fig:der2D32}, which shows the graphs of $D_{3,2}^*$ and its derivatives.
These results collectively imply the desired conclusion. Specifically, the final inequality indicates that the first derivative $\partial_H D_{3,2}^*$ strictly decreases as $H$ approaches 1. Given that $\partial_H D_{3,2}^*$ is continuous and equals zero at $H = 1$, it follows that
$\partial_H D_{3,2}^*$ is positive on some interval
$(\delta,1)$. Consequently, the determinant $D_{3,2}^*$ itself strictly increases on $(\delta,1)$. Since $D_{3,2}^*=0$ at $H = 1$, we can conclude that
$D_{3,2}^* < 0$ for $H \in (\delta,1)$.

\begin{figure}
\centering
\includegraphics[scale=0.83]{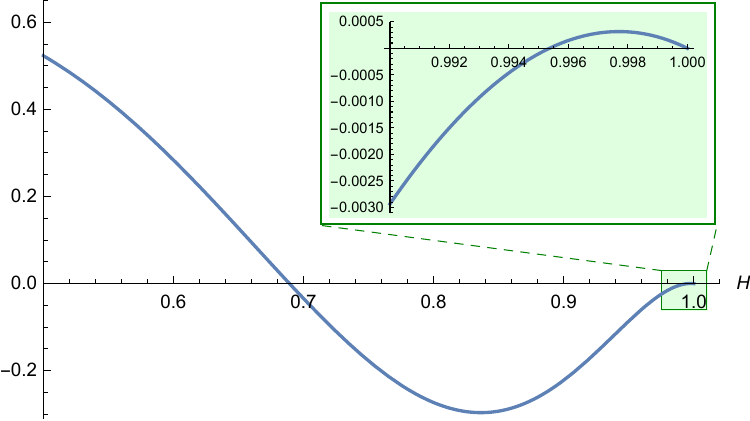}
\caption{The derivative $\partial_H D_{3,2}^*$ as a function of $H$}
\end{figure}

\begin{figure}
\centering
\includegraphics[scale=0.83]{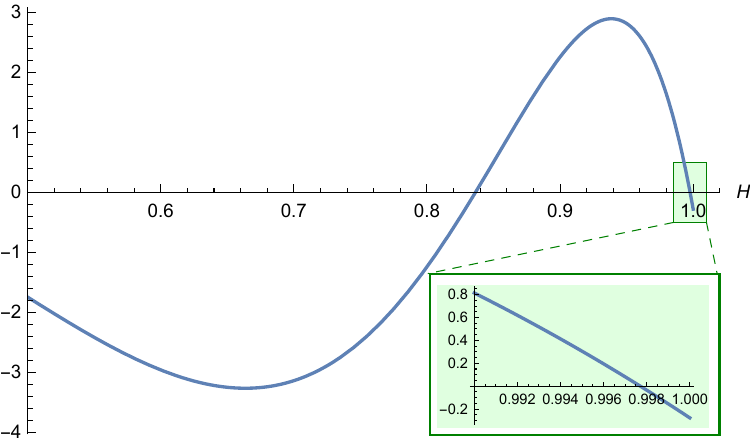}
\caption{The second derivative $\partial_{H\!H} D_{3,2}^*$ as a function of $H$}\label{fig:der2D32}
\end{figure}

The first equality in \eqref{eq:D32deriv} follows from the fact that if $H=1$, then $\rho_k = 1$ for any $k$. Therefore,
\[
 D_{3,2}^* =
 \begin{vmatrix}
1+\rho_6 & \rho_3 & \rho_2+\rho_4\\
\rho_1+\rho_5 & \rho_2 & \rho_1+\rho_3\\
\rho_2+\rho_4 & \rho_1 & 1+\rho_2
\end{vmatrix}
\xrightarrow[H\uparrow1]{}
 \begin{vmatrix}
2 & 1 & 2\\
2 & 1 & 2\\
2 & 1 & 2
\end{vmatrix}
=0.
\]
Similarly, for the first derivative we have (denoting by $\hat\rho_k'$ the values of $\rho_k'$ at $H=1$)
\begin{align*}
\partial_H D_{3,2}^* &=
 \begin{vmatrix}
\rho_6' & \rho_3 & \rho_2+\rho_4\\
\rho_1'+\rho_5' & \rho_2 & \rho_1+\rho_3\\
\rho_2'+\rho_4' & \rho_1 & 1+\rho_2
\end{vmatrix}
+
\begin{vmatrix}
1+\rho_6 & \rho_3' & \rho_2+\rho_4\\
\rho_1+\rho_5 & \rho_2' & \rho_1+\rho_3\\
\rho_2+\rho_4 & \rho_1' & 1+\rho_2
\end{vmatrix}
+
\begin{vmatrix}
1+\rho_6 & \rho_3 & \rho_2'+\rho_4'\\
\rho_1+\rho_5 & \rho_2 & \rho_1'+\rho_3'\\
\rho_2+\rho_4 & \rho_1 & \rho_2'
\end{vmatrix}
\\
&\xrightarrow[H\uparrow1]{}
\begin{vmatrix}
\hat\rho_6' & 1 & 2\\
\hat\rho_1'+\hat\rho_5' & 1 & 2\\
\hat\rho_2'+\hat\rho_4' & 1 & 2
\end{vmatrix}
+
\begin{vmatrix}
2 & \hat\rho_3' & 2\\
2 & \hat\rho_2' & 2\\
2 & \hat\rho_1' & 2
\end{vmatrix}
+
\begin{vmatrix}
2 & 1 & \hat\rho_2'+\hat\rho_4'\\
2 & 1 & \hat\rho_1'+\hat\rho_3'\\
2 & 1 & \hat\rho_2'
\end{vmatrix}
= 0,
\end{align*}
because all matrices in the limit have two linearly dependent columns.

Finally, let us calculate the second derivative. We have
\begin{align*}
\partial_{H\!H} D_{3,2}^* &=
\begin{vmatrix}
\rho_6'' & \rho_3 & \rho_2+\rho_4\\
\rho_1''+\rho_5' & \rho_2 & \rho_1+\rho_3\\
\rho_2''+\rho_4' & \rho_1 & 1+\rho_2
\end{vmatrix}
+
\begin{vmatrix}
1+\rho_6 & \rho_3'' & \rho_2+\rho_4\\
\rho_1+\rho_5 & \rho_2'' & \rho_1+\rho_3\\
\rho_2+\rho_4 & \rho_1'' & 1+\rho_2
\end{vmatrix}
+
\begin{vmatrix}
1+\rho_6 & \rho_3 & \rho_2''+\rho_4''\\
\rho_1+\rho_5 & \rho_2 & \rho_1''+\rho_3''\\
\rho_2+\rho_4 & \rho_1 & \rho_2''
\end{vmatrix}
\\
&\quad+ 2
\begin{vmatrix}
\rho_6' & \rho_3' & \rho_2+\rho_4\\
\rho_1'+\rho_5' & \rho_2' & \rho_1+\rho_3\\
\rho_2'+\rho_4' & \rho_1' & 1+\rho_2
\end{vmatrix}
+ 2
\begin{vmatrix}
\rho_6' & \rho_3 & \rho_2'+\rho_4'\\
\rho_1'+\rho_5' & \rho_2 & \rho_1'+\rho_3'\\
\rho_2'+\rho_4' & \rho_1 & \rho_2'
\end{vmatrix}
+ 2
\begin{vmatrix}
1+\rho_6 & \rho_3' & \rho_2'+\rho_4'\\
\rho_1+\rho_5 & \rho_2' & \rho_1'+\rho_3'\\
\rho_2+\rho_4 & \rho_1' & \rho_2'
\end{vmatrix}
\\
&\xrightarrow[H\uparrow1]{}
2
\begin{vmatrix}
\hat\rho_6' & \hat\rho_3' & 2\\
\hat\rho_1'+\hat\rho_5' & \hat\rho_2' & 2\\
\hat\rho_2'+\hat\rho_4' & \hat\rho_1' & 2
\end{vmatrix}
+ 2
\begin{vmatrix}
\hat\rho_6' & 1 & \hat\rho_2'+\hat\rho_4'\\
\hat\rho_1'+\hat\rho_5' & 1 & \hat\rho_1'+\hat\rho_3'\\
\hat\rho_2'+\hat\rho_4' & 1 & \hat\rho_2'
\end{vmatrix}
+ 2
\begin{vmatrix}
2 & \hat\rho_3' & \hat\rho_2'+\hat\rho_4'\\
2 & \hat\rho_2' & \hat\rho_1'+\hat\rho_3'\\
2 & \hat\rho_1' & \hat\rho_2'
\end{vmatrix}.
\end{align*}
Expanding the determinants and rearranging terms, we arrive at
\begin{equation}\label{eq:secder}
\begin{split}
\lim_{H\uparrow1}\partial_{H\!H} D_{3,2}^* &=
6 \hat\rho_1' \hat\rho_2' - 4 \hat\rho_1' \hat\rho_3'
+ 8 \hat\rho_1' \hat\rho_4' + 4 \hat\rho_1' \hat\rho_5'
- 6\hat\rho_1' \hat\rho_6' - 6(\hat\rho_2')^2
+ 2 \hat\rho_2' \hat\rho_3' - 12 \hat\rho_2' \hat\rho_4'
\\
&\quad + 6 \hat\rho_2' \hat\rho_6' + 4 (\hat\rho_3')^2
+ 6 \hat\rho_3' \hat\rho_4' - 4 \hat\rho_3' \hat\rho_5'
- 2 \hat\rho_3' \hat\rho_6' - 2 (\hat\rho_4')^2
+ 2 \hat\rho_4' \hat\rho_5'.
\end{split}
\end{equation}
Next, we calculate the values of
$\hat\rho_1', \dots, \hat\rho_6'$:
\begin{align*}
\hat\rho_1' &= 4 \log 2,
&
\hat\rho_2' &= 9 \log 3 - 8 \log 2,
\\
\hat\rho_3' &= 36 \log 2 - 18 \log 3,
&
\hat\rho_4' &= -64 \log 2 + 9 \log 3 + 25 \log 5,
\\
\hat\rho_5' &= 32 \log 2 - 50 \log 5 + 72 \log 6,
&
\hat\rho_6' &= 25 \log 5 -72 \log 6 + 49 \log 7.
\end{align*}
After substituting these values into \eqref{eq:secder} and computing the numerical value, we find that
\[
\lim_{H\uparrow1}\partial_{H\!H} D_{3,2}^* \approx -0.277226.
\]
Thus, \eqref{eq:D32deriv} holds, and the proof follows.
\end{proof}

Concerning the projection coefficients, they are presented in Figure \ref{fig:Q3}. The maximum value of $Q_3^2$ is attained at $H \approx 0.7152$ and equals $0.0530381$.
The maximum value of $Q_3^3$ is attained at $H \approx 0.8729$ and equals $0.0554454$.
The limits of $Q_3^1$, $Q_3^2$, and $Q_3^3$ as $H\uparrow1$ are $0.449901$, $-0.002201$ and $0.052300$ respectively.
Note that the sum of the coefficients still converges to $0.5$.

\begin{figure}[ht]
 \includegraphics[scale=0.65]{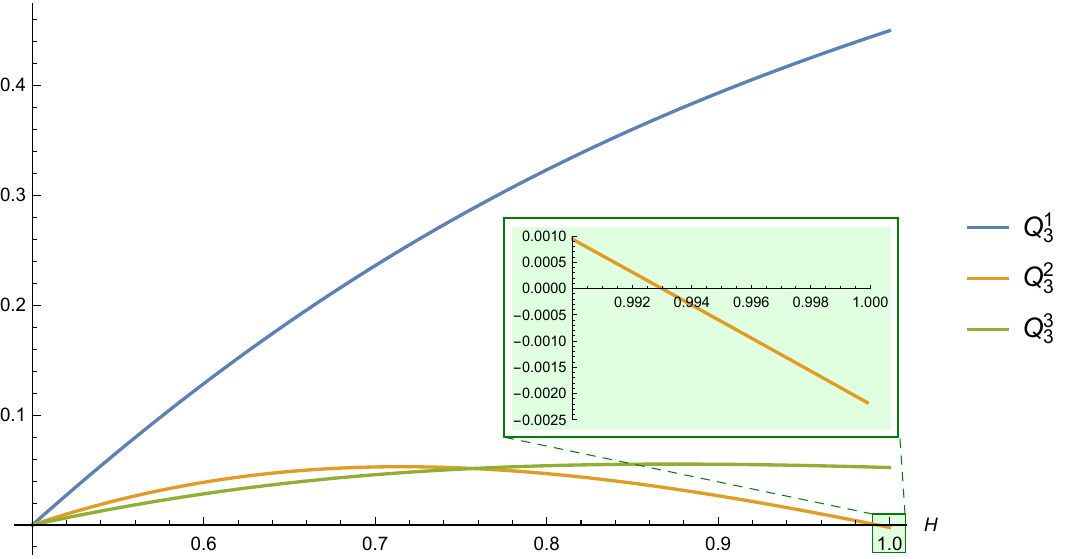}
 \caption{Graphs of $Q_3^1$, $Q_3^2$ and $Q_3^3$ as functions of $H$.}\label{fig:Q3}
\end{figure}

\subsection{Further numerical results}
For the cases $n = 5$ and $n = 6$, we provide the graphs in Figures \ref{fig:Q4} and \ref{fig:Q5}. In both cases, all coefficients except the second ones (i.e., $Q_4^2$ and $Q_5^2$) are positive for all $H \in (0.5, 1)$. The second coefficients become negative as $H$ approaches 1.

\begin{figure}[ht]
 \includegraphics[scale=0.65]{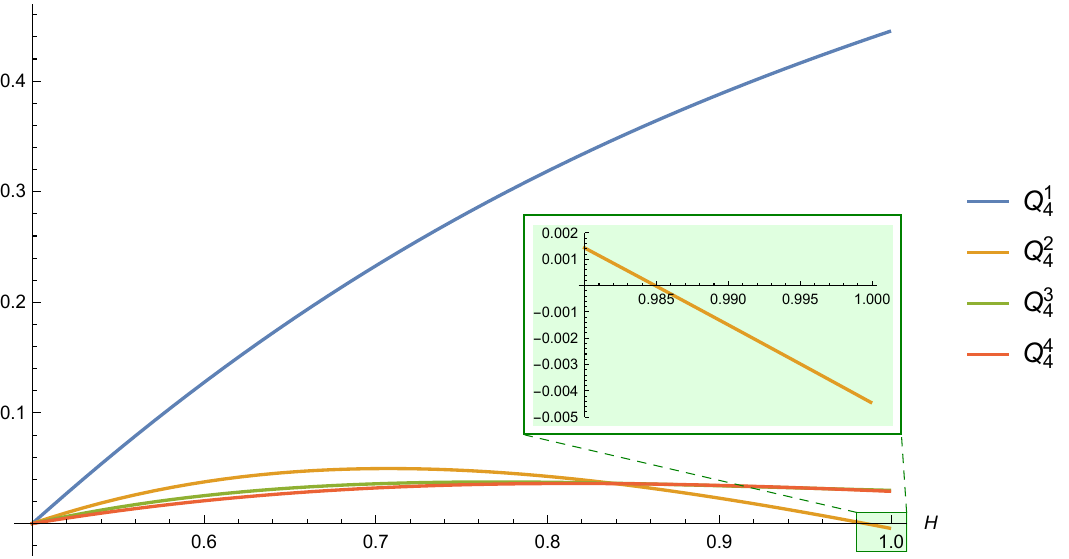}
 \caption{Graphs of $Q_4^k$, $k = 1, 2, 3, 4$, as functions of $H$.}\label{fig:Q4}
\end{figure}

\begin{figure}[ht]
 \includegraphics[scale=0.8]{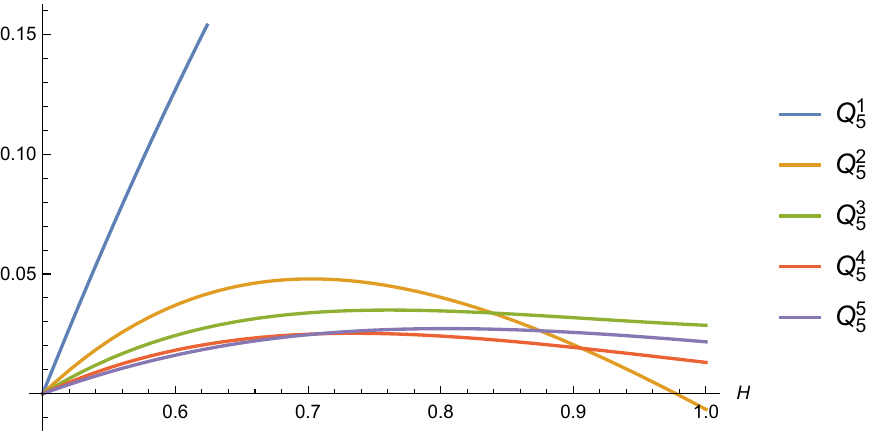}
 \caption{Graphs of $Q_5^k$, $k = 1, \dots, 5$, as functions of $H$.}\label{fig:Q5}

\end{figure}

Figure \ref{fig:Qsecond} illustrates the behavior of the second coefficient $Q_n^2$ for various $n = 2, \dots, 6$ as a function of $H \in (0.5, 1)$. All graphs start at zero, increase to a maximum point, and then decrease. Except for $Q_2^2$, all of them eventually become negative.

\begin{figure}[th]
 \includegraphics[scale=.85]{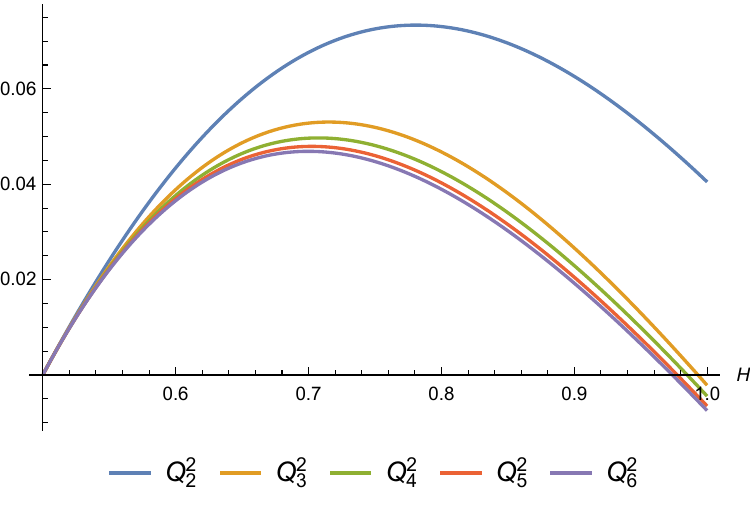}
  \qquad
    \begin{tabular}[b]{cc}\toprule
      $n$ & Root of $Q_n^2$ \\ \midrule
      3 & 0.99300 \\
      4 & 0.98491 \\
      5 & 0.97742 \\
      6 & 0.97375\\
      7 & 0.97143\\
      8 & 0.96991\\
      9 & 0.96885\\
      10 & 0.96807\\
      \bottomrule
      \\[3mm]
      \\
      \\
      \\
    \end{tabular}
 \caption{Graphs of $Q_n^2$, $n = 2, \dots, 6$, as functions of $H$. Points of intersection with $H$-axis.}\label{fig:Qsecond}
\end{figure}

We also compute the coefficients $Q_n^k$ for various values of $H$ numerically. Tables \ref{tab:0.51}--\ref{tab:0.99} list the results for $H = 0.51$, $0.6$, $0.7$, $0.8$, $0.9$, and $0.99$ for $1 \le n \le 10$.

Let us summarize our findings from the graphical and numerical results. The following patterns can be observed:
\begin{enumerate}[\itshape(i)]
\item All coefficients are positive except $Q_n^2$ for $n \ge 3$.

\item \label{o2} The first coefficient in each row is the largest, i.e., $Q_n^1 > Q_n^k$ for any $2 \le k \le n$. Often, it is substantially larger than any other coefficient in the row.

\item \label{o3} Monotonicity along each column holds, i.e., $Q_n^k > Q_{n+1}^k$ for fixed $k$.

\item As a function of $H$, the first coefficient in each row, i.e., $Q_n^1$ for $n \ge 1$, increases as $H$ increases from 0.5 to 1. Other coefficients ($Q_n^k$ for $2\le k\le n$) increase to a maximum point and then decrease.

\item All coefficients are convex upwards functions of $H \in (0.5, 1)$.

\item For each fixed $n$, the sum of the coefficients $Q_n^1 + \dots + Q_n^n$ converges to 0.5 as $H \uparrow 1$.

\end{enumerate}

Note that properties \ref{o2} and \ref{o3} are similar to those of the coefficients $\Gamma_n^k$, as studied in \cite{MishuraRalchenkoSchilling2022}. Unlike $Q_n^k$, all $\Gamma_n^k$ are positive for $0.5 < H < 1$; the sum of $\Gamma_n^k$ for any fixed $n$ tends to 1 as $H \uparrow 1$.

\setlength{\tabcolsep}{3.25pt}

\begin{table}[ht]
\caption{Coefficients $Q_n^k$ for $H=0.51$.}\label{tab:0.51}
\centering
\begin{tabular}{*{11}{l}}					
  \toprule\footnotesize
$\bm{n\backslash k}$ & \textbf{1} & \textbf{2} & \textbf{3} & \textbf{4} & \textbf{5} & \textbf{6} & \textbf{7} & \textbf{8} & \textbf{9} & \textbf{10} \\
  \midrule
1 & 0.013884 & & & & & & & & & \\
 2 & 0.013795 & 0.005149 & & & & & & & & \\
 3 & 0.013769 & 0.005096 & 0.003342 & & & & & & & \\
 4 & 0.013756 & 0.005079 & 0.003304 & 0.002480 & & & & & & \\
 5 & 0.013747 & 0.005070 & 0.003292 & 0.002451 & 0.001973 & & & & & \\
 6 & 0.013742 & 0.005064 & 0.003284 & 0.002441 & 0.001949 & 0.001638 & & & & \\
 7 & 0.013738 & 0.005059 & 0.003279 & 0.002435 & 0.001940 & 0.001617 & 0.001400 & & & \\
 8 & 0.013735 & 0.005056 & 0.003276 & 0.002431 & 0.001935 & 0.001610 & 0.001383 & 0.001223 & &  \\
 9 & 0.013732 & 0.005054 & 0.003273 & 0.002428 & 0.001932 & 0.001606 & 0.001376 & 0.001207 & 0.001085 & \\
 10 & 0.013730 & 0.005052 & 0.003271 & 0.002425 & 0.001929 & 0.001603 & 0.001372 & 0.001202 & 0.001071 & 0.000975 \\
\bottomrule
\end{tabular}
\end{table}

\begin{table}[ht]
\caption{Coefficients $Q_n^k$ for $H=0.6$.}
\centering
\begin{tabular}{*{11}{l}}					
  \toprule\small
$\bm{n\backslash k}$ & \textbf{1} & \textbf{2} & \textbf{3} & \textbf{4} & \textbf{5} & \textbf{6} & \textbf{7} & \textbf{8} & \textbf{9} & \textbf{10} \\
  \midrule
1 & 0.138815 & & & & & & & & &  \\
 2 & 0.130739 & 0.043422 & & & & & & & &  \\
 3 & 0.128648 & 0.038862 & 0.028344 & & & & & & & \\
 4 & 0.127592 & 0.037639 & 0.025192 & 0.020583 & & & & & & \\
 5 & 0.126965 & 0.036969 & 0.024314 & 0.018184 & 0.016087 & & & & & \\
 6 & 0.126552 & 0.036549 & 0.023814 & 0.017503 & 0.014158 & 0.013157 & & & & \\
 7 & 0.126260 & 0.036261 & 0.023490 & 0.017104 & 0.013603 & 0.011548 & 0.011103 & & & \\
 8 & 0.126044 & 0.036051 & 0.023262 & 0.016841 & 0.013272 & 0.011081 & 0.009725 & 0.009585 & & \\
 9 & 0.125878 & 0.035892 & 0.023092 & 0.016653 & 0.013051 & 0.010799 & 0.009322 & 0.008382 & 0.008419 & \\
 10 & 0.125746 & 0.035767 & 0.022962 & 0.016511 & 0.012891 & 0.010609 & 0.009077 & 0.008029 & 0.007354 & 0.007498 \\
\bottomrule
\end{tabular}
\end{table}

\begin{table}[ht]
\caption{Coefficients $Q_n^k$ for $H=0.7$.}
\centering
\begin{tabular}{*{11}{l}}					
  \toprule\small
$\bm{n\backslash k}$ & \textbf{1} & \textbf{2} & \textbf{3} & \textbf{4} & \textbf{5} & \textbf{6} & \textbf{7} & \textbf{8} & \textbf{9} & \textbf{10} \\
  \midrule
1 & 0.268776 & & & & & & & & &  \\
 2 & 0.242278 & 0.067642 & & & & & & & & \\
 3 & 0.236101 & 0.052812 & 0.045780 & & & & & & & \\
 4 & 0.233065 & 0.049661 & 0.035927 & 0.032171 & & & & & & \\
 5 & 0.231346 & 0.047932 & 0.033750 & 0.024852 & 0.024639 & & & & & \\
 6 & 0.230255 & 0.046897 & 0.032493 & 0.023212 & 0.018868 & 0.019801 & & & & \\
 7 & 0.229509 & 0.046210 & 0.031715 & 0.022234 & 0.017561 & 0.015065 & 0.016460 & & & \\
 8 & 0.228971 & 0.045725 & 0.031184 & 0.021615 & 0.016765 & 0.013985 & 0.012463 & 0.014025 & & \\
 9 & 0.228567 & 0.045365 & 0.030800 & 0.021185 & 0.016254 & 0.013317 & 0.011546 & 0.010579 & 0.012178 &  \\
 10 & 0.228255 & 0.045089 & 0.030510 & 0.020869 & 0.015894 & 0.012883 & 0.010973 & 0.009785 & 0.009158 & 0.010733 \\
\bottomrule
\end{tabular}
\end{table}

\begin{table}
\caption{Coefficients $Q_n^k$ for $H=0.8$.}
\centering
\begin{tabular}{*{11}{l}}					
  \toprule\small
$\bm{n\backslash k}$ & \textbf{1} & \textbf{2} & \textbf{3} & \textbf{4} & \textbf{5} & \textbf{6} & \textbf{7} & \textbf{8} & \textbf{9} & \textbf{10} \\
  \midrule
1 & 0.376892 & & & & & & & & & \\
 2 & 0.332755 & 0.073057 & & & & & & & &  \\
 3 & 0.323218 & 0.046763 & 0.053946 & & & & & & & \\
 4 & 0.318643 & 0.042641 & 0.037340 & 0.036154 & & & & & & \\
 5 & 0.316190 & 0.040274 & 0.034583 & 0.024079 & 0.027185 & & & & & \\
 6 & 0.314689 & 0.038939 & 0.032895 & 0.022066 & 0.017849 & 0.021453 & & & & \\
 7 & 0.313695 & 0.038083 & 0.031908 & 0.020780 & 0.016283 & 0.013915 & 0.017566 & & & \\
 8 & 0.312999 & 0.037495 & 0.031255 & 0.020011 & 0.015254 & 0.012646 & 0.011291 & 0.014771 & & \\
 9 & 0.312489 & 0.037071 & 0.030796 & 0.019492 & 0.014628 & 0.011794 & 0.010232 & 0.009428 & 0.012677 & \\
 10 & 0.312103 & 0.036753 & 0.030457 & 0.019121 & 0.014200 & 0.011271 & 0.009510 & 0.008524 & 0.008045 & 0.011056 \\
\bottomrule
\end{tabular}
\end{table}

\begin{table}
\caption{Coefficients $Q_n^k$ for $H=0.9$.}
\centering
\begin{tabular}{*{11}{l}}					
  \toprule\small
$\bm{n\backslash k}$ & \textbf{1} & \textbf{2} & \textbf{3} & \textbf{4} & \textbf{5} & \textbf{6} & \textbf{7} & \textbf{8} & \textbf{9} & \textbf{10} \\
  \midrule
 1 & 0.454626 & & & & & & & & &  \\
 2 & 0.403922 & 0.062598 & & & & & & & &  \\
 3 & 0.393175 & 0.026612 & 0.055279 & & & & & & & \\
 4 & 0.388117 & 0.022914 & 0.034159 & 0.034475 & & & & & & \\
 5 & 0.385560 & 0.020488 & 0.031750 & 0.019333 & 0.025609 & & & & & \\
 6 & 0.384053 & 0.019229 & 0.030042 & 0.017646 & 0.014144 & 0.019817 & & & & \\
 7 & 0.383088 & 0.018445 & 0.029120 & 0.016370 & 0.012869 & 0.010706 & 0.015985 & & & \\
 8 & 0.382430 & 0.017924 & 0.028526 & 0.015666 & 0.011862 & 0.009697 & 0.008507 & 0.013264 & & \\
 9 & 0.381961 & 0.017557 & 0.028120 & 0.015202 & 0.011298 & 0.008875 & 0.007682 & 0.006973 & 0.011252 & \\
 10 & 0.381613 & 0.017289 & 0.027828 & 0.014879 & 0.010921 & 0.008409 & 0.006992 & 0.006281 & 0.005857 & 0.009711 \\
\bottomrule
\end{tabular}
\end{table}
\clearpage
\begin{table}
\caption{Coefficients $Q_n^k$ for $H=0.99$.}\label{tab:0.99}
\centering
\begin{tabular}{*{11}{l}}					
  \toprule\small
$\bm{n\backslash k}$ & \textbf{1} & \textbf{2} & \textbf{3} & \textbf{4} & \textbf{5} & \textbf{6} & \textbf{7} & \textbf{8} & \textbf{9} & \textbf{10} \\
  \midrule
1 & 0.496847 & & & & & & & & & \\
 2 & 0.454561 & 0.043067 & & & & & & & & \\
 3 & 0.444715 & 0.000938 & 0.052726 & & & & & & & \\
 4 & 0.440112 & $-0.001495$ & 0.030379 & 0.029737 & & & & & & \\
 5 & 0.437912 & $-0.003621$ & 0.028833 & 0.013718 & 0.022123 & & & & & \\
 6 & 0.436656 & $-0.004607$ & 0.027350 & 0.012702 & 0.010257 & 0.016765 & & & & \\
 7 & 0.435877 & $-0.005212$ & 0.026630 & 0.011610 & 0.009520 & 0.007463 & 0.013351 & & & \\
 8 & 0.435359 & $-0.005601$ & 0.026173 & 0.011071 & 0.008669 & 0.006900 & 0.005813 & 0.010945 & & \\
 9 & 0.434998 & $-0.005868$ & 0.025869 & 0.010719 & 0.008243 & 0.006212 & 0.005366 & 0.004673 & 0.009187 & \\
 10 & 0.434735 & $-0.006059$ & 0.025656 & 0.010482 & 0.007961 & 0.005865 & 0.004794 & 0.004307 & 0.003862 & 0.007854 \\
\bottomrule
\end{tabular}
\end{table}


\section{Asymptotic behavior of the projection norm}\label{sec5}

In this section we investigate the asymptotic behavior of the norms of projections \eqref{projectgamma1} and \eqref{projectn} with the growth of $H$ and $n$, i.e.,
\begin{gather*}
 R_1(n) = \ex\left|\ex (\Delta_1\mid\Delta_2,\ldots, \Delta_n) \right|^2,\\
 R_2(n) = \ex\left|\ex(\Delta_{n}|\Delta_{1},\ldots,\Delta_{n-1},\Delta_{n+1},\ldots,\Delta_{2n-1}) \right|^2.
\end{gather*}

We can represent these norms as follows.
\begin{proposition}
The norms $R_1(n)$ and $R_2(n)$ admit the following representations:
\begin{equation}\label{eq:norms}
R_1(n) = \sum_{k=2}^n\Gamma_n^k\rho_{k-1},
\qquad
R_2(n) = 2\sum_{k=1}^{n-1} Q_{n-1}^k \rho_{k}.
\end{equation}
\end{proposition}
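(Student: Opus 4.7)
The plan is to use the well-known identity that for any square-integrable random variable $X$ and any $\sigma$-algebra $\mathcal{G}$, if $Y = \ex(X\mid\mathcal{G})$, then $\ex Y^2 = \ex(YX)$. This follows from $\ex[Y(X-Y)] = \ex[Y\ex(X-Y\mid\mathcal{G})] = 0$, since $Y$ is $\mathcal{G}$-measurable and $\ex(X-Y\mid\mathcal{G}) = 0$. The norm of an $L_2$-projection therefore equals the covariance of the projection with the projected variable.

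For $R_1(n)$, I would apply this identity with $X = \Delta_1$ and $\mathcal{G} = \sigma(\Delta_2,\ldots,\Delta_n)$, substitute the explicit form \eqref{projectgamma1} of the projection, and pull the sum out of the expectation:
\begin{equation*}
R_1(n) = \ex\!\left[\Delta_1 \sum_{k=2}^{n} \Gamma_n^k \Delta_k\right] = \sum_{k=2}^{n}\Gamma_n^k \ex(\Delta_1\Delta_k) = \sum_{k=2}^{n}\Gamma_n^k\rho_{k-1}.
\end{equation*}

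For $R_2(n)$, I would apply the same identity with $X = \Delta_n$ and the two-sided conditioning $\sigma$-algebra, then use the expansion \eqref{projectn}. This gives
\begin{equation*}
R_2(n) = \ex\!\left[\Delta_n \sum_{\substack{k=1\\ k\neq n}}^{2n-1} Q_{n-1}^{|n-k|}\Delta_k\right] = \sum_{\substack{k=1\\ k\neq n}}^{2n-1} Q_{n-1}^{|n-k|}\rho_{|n-k|}.
\end{equation*}
Splitting the sum at $k=n$ into indices $k<n$ and $k>n$, and using the change of variable $j=n-k$ on the left part and $j=k-n$ on the right part, both sums become $\sum_{j=1}^{n-1} Q_{n-1}^{j}\rho_{j}$, giving the factor of $2$ in \eqref{eq:norms}.

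There is no real obstacle here; the only thing to be careful about is the symmetry argument that doubles the sum in $R_2(n)$, which relies on the fact that the coefficients depend only on the absolute distance $|n-k|$ and that $\rho$ is indexed by the same absolute distance, so the two halves of the sum coincide term by term.
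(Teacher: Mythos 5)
Your proof is correct, and it reaches the result by a slightly different route than the paper. The paper expands the squared sum directly, obtaining the double sum $\sum_{k,l}\Gamma_n^k\Gamma_n^l\rho_{|k-l|}$ (respectively $2\sum_{k,l}Q_{n-1}^kQ_{n-1}^l(\rho_{|k-l|}+\rho_{k+l})$ in the bilateral case), and then collapses the inner sum using the defining linear systems \eqref{eq:gamma-sys} and \eqref{systq}. You instead invoke the projection identity $\ex Y^2=\ex(XY)$ for $Y=\ex(X\mid\mathcal G)$ at the outset, which turns each norm into a single covariance sum and bypasses the double sum entirely. The two arguments rest on the same underlying fact --- orthogonality of the residual $X-\ex(X\mid\mathcal G)$ to the conditioning variables, which is exactly what the normal equations \eqref{eq:gamma-sys} and \eqref{systq} express --- but your version applies that orthogonality against $Y$ itself rather than against each $\Delta_l$ separately, yielding a shorter computation; the paper's version makes the role of the coefficient systems explicit, which is in keeping with how those systems are used elsewhere in the text. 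Your handling of the factor $2$ in $R_2(n)$, splitting the sum over $k\neq n$ at $k=n$ and using that both the coefficients and the autocovariances depend only on $|n-k|$, is sound and matches the symmetric form \eqref{projectj} used in the paper.
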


\begin{proof}
By the definition of the norm $R_1(n)$ and the representation \eqref{projectgamma1}, we have
\begin{gather*}
 R_1(n) = \ex\left|\sum_{k=2}^n \Gamma_n^k \Delta_k \right|^2
 =\sum_{k,l=2}^n \Gamma_n^k \Gamma_n^l \rho_{|k-l|}.
\end{gather*}
Now taking into account \eqref{eq:gamma-sys}, we obtain the first identity in \eqref{eq:norms}.

The identity for $R_2(n)$ can be proved similarly.
By \eqref{projectj},
\begin{equation}\label{eq:R2norm}
\begin{split}
 R_2(n) &=\ex\left|\sum_{k=1}^{n-1} Q_{n-1}^k
 (\Delta_{n-k}+\Delta_{n+k}) \right|^2
 =\sum_{k,l=1}^{n-1} Q_{n-1}^k Q_{n-1}^l
 \ex(\Delta_{n-k}+\Delta_{n+k})(\Delta_{n-l}+\Delta_{n+l})\\
 &
 =2\sum_{k=1}^{n-1} Q_{n-1}^k \sum_{l=1}^{n-1}  Q_{n-1}^l
 \left(\rho_{|k-l|}+\rho_{k+l}\right),
\end{split}
\end{equation}
Observe that by \eqref{systq}
\[
 \rho_{k}=
 \sum_{j=1}^{n-1}Q_{n-1}^{n-j}\left(\rho_{|j-n+k|}+\rho_{|n-j+k|}\right)
 =
 \sum_{l=1}^{n-1}Q_{n-1}^{l}\left(\rho_{|k-l|}+\rho_{k+l}\right),
 \quad 1\le k\le n-1.
\]
We now substitute this identity into the right-hand side of \eqref{eq:R2norm}
and arrive to the desired formula for $R_2(n)$.
\end{proof}

We numerically study the behavior of the norms $R_1(n)$ and $R_2(n)$. Figure \ref{fig:norms1} illustrates their behavior as functions of $n$. It is observed that both norms increase with increasing
$n$ or $H$. Additionally, as $n\to\infty$, $R_1(n)$ and $R_2(n)$ approach certain limits that depend on $H$ and can be calculated numerically.


\begin{figure}
\includegraphics[width=0.4\linewidth]{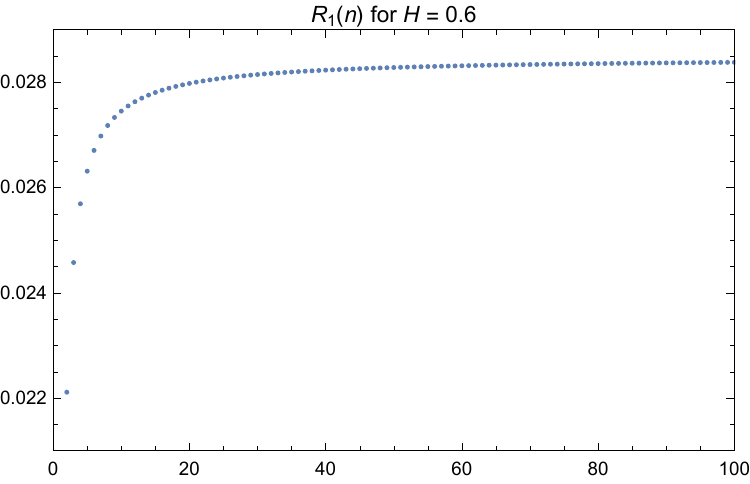}
\includegraphics[width=0.4\linewidth]{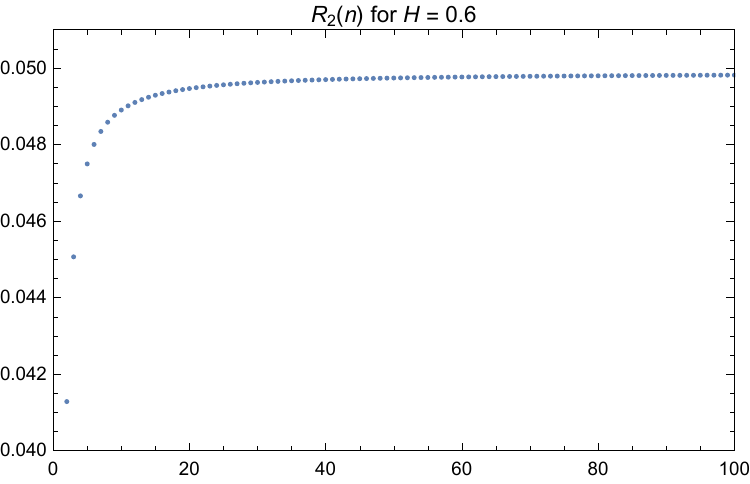}
\\
\includegraphics[width=0.4\linewidth]{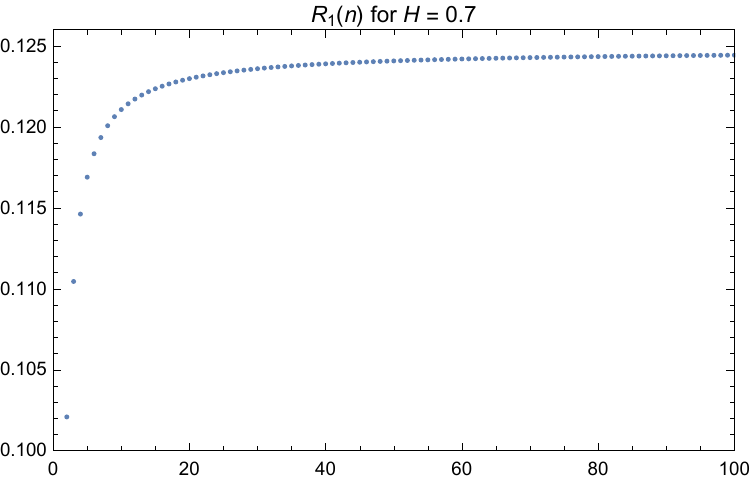}
\includegraphics[width=0.4\linewidth]{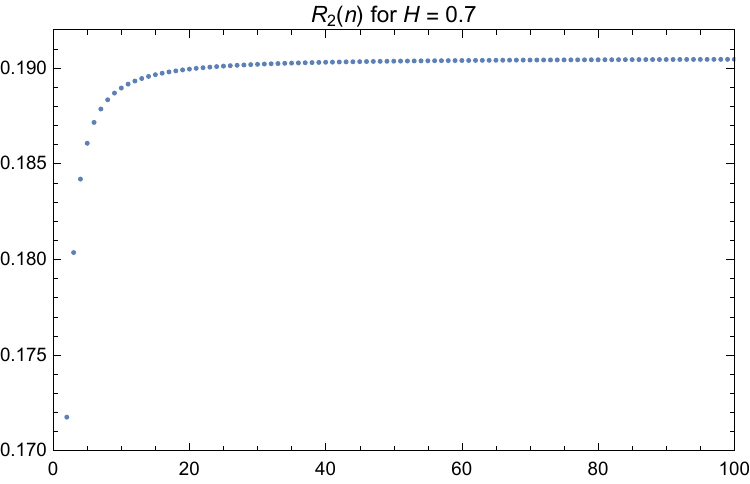}
\\
\includegraphics[width=0.4\linewidth]{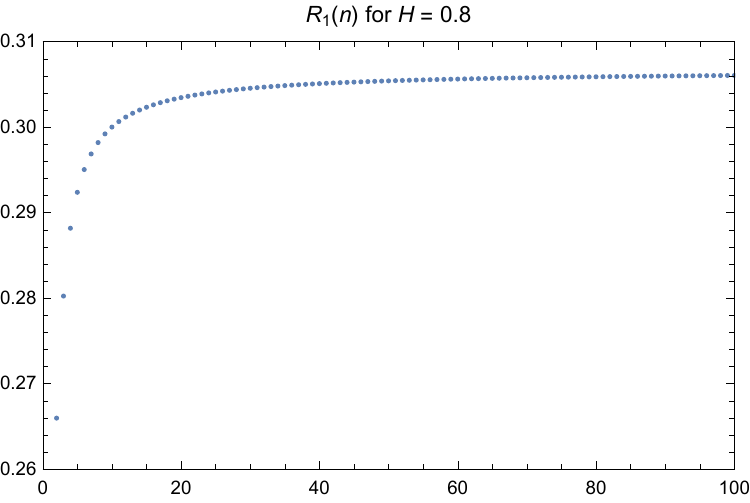}
\includegraphics[width=0.4\linewidth]{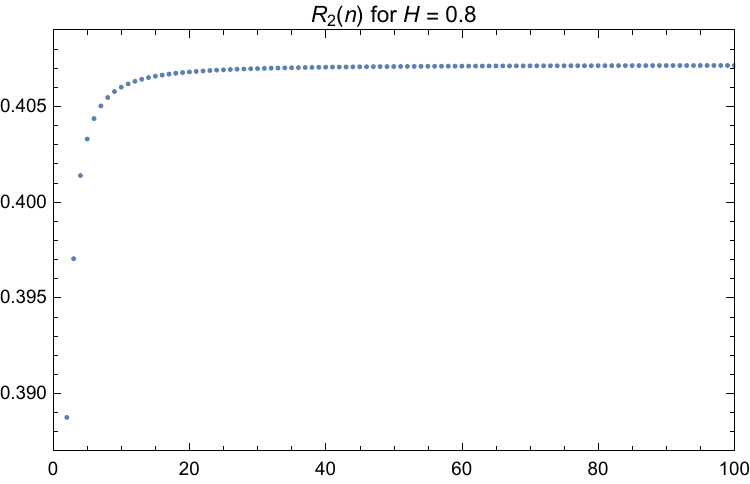}
\\
\includegraphics[width=0.4\linewidth]{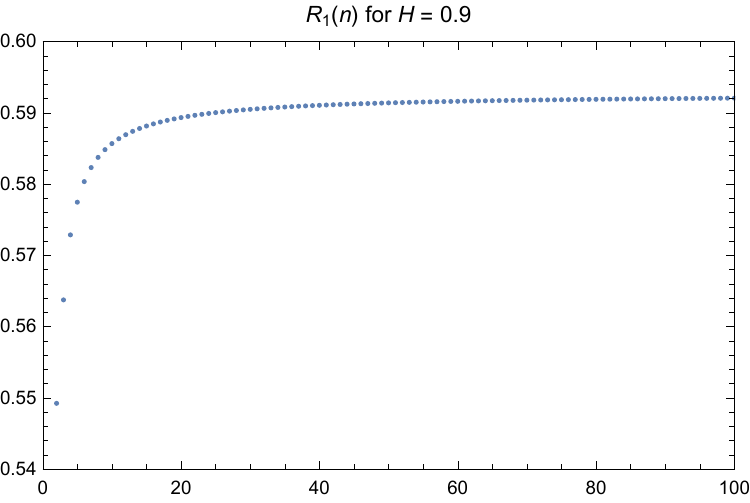}
\includegraphics[width=0.4\linewidth]{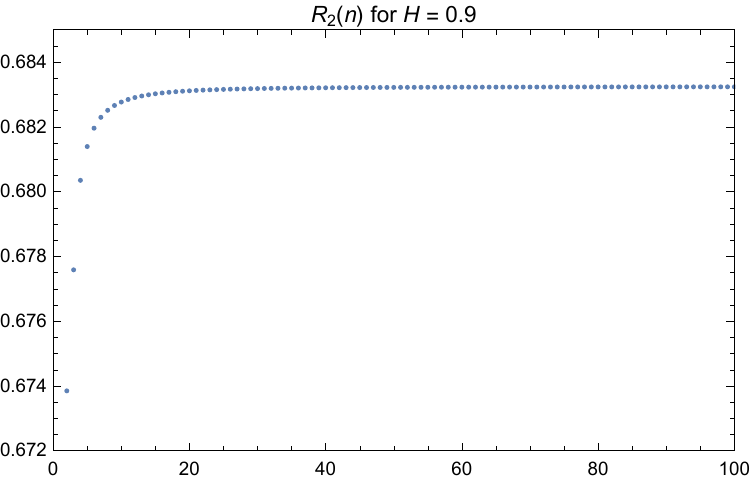}
\caption{Norms $R_1(n)$ (left) and $R_2(n)$ (right) as functions of $n$ for $H = 0.6, 0.7, 0.8, 0.9$}\label{fig:norms1}
\end{figure}

Table \ref{tab:norms} presents the values of the norms for various $n$ and $H$. For fixed $n$ and $H$, it is evident that
$R_2(n)$ is greater than $R_1(n)$.

\setlength{\tabcolsep}{4.5pt}
\begin{table}[h]
\caption{Norms $R_1(n)$ and $R_2(n)$ for various $n$}\label{tab:norms}
\small
    \begin{tabular}{cc*{7}{l}}	\toprule
      & $\bm n$ & \textbf{2} & \textbf{50} & \textbf{100} & \textbf{200} & \textbf{300} & \textbf{400} \\ \midrule
      \multirow{2}{*}{$H=0.6$} & $R_1(n)$ & 0.022111  & 0.028281 & 0.028380 & 0.028429 & 0.028445 & 0.028453 \\
      & $R_2(n)$ & 0.041283 & 0.049737 & 0.049813 & 0.049846 & 0.049856 & 0.049860  \\
      \midrule
      \multirow{2}{*}{$H=0.7$} & $R_1(n)$ & 0.102085 & 0.124071 & 0.124427 & 0.124604 & 0.124662 & 0.124692  \\
      & $R_2(n)$ & 0.171752 & 0.190355 & 0.190451 & 0.190487 & 0.190496 & 0.190500 \\
      \midrule
      \multirow{2}{*}{$H=0.8$} & $R_1(n)$ & 0.265964 &  0.305413 & 0.306050 &  0.306365 & 0.306469 & 0.306521  \\
      & $R_2(n)$ & 0.388739 & 0.407076 & 0.407131 & 0.407149 & 0.407154 & 0.407155 \\
      \midrule
      \multirow{2}{*}{$H=0.9$} & $R_1(n)$ & 0.549231 & 0.591406 & 0.592072 & 0.592402 & 0.592511 & 0.592565 \\
      & $R_2(n)$ & 0.673847 &  0.683218 & 0.683235 & 0.68324 & 0.683241 & 0.683242 \\
      \midrule
      \multirow{2}{*}{$H=0.99$} & $R_1(n)$ & 0.945689 & 0.953188 & 0.953302 & 0.953359 & 0.953378 & 0.953387 \\
      & $R_2(n)$ & 0.966334 &  0.967078 & 0.967079 & 0.967079 & 0.967079 & 0.967079 \\
      \bottomrule
    \end{tabular}
\end{table}

Figure \ref{fig:norms2} contains the graphs of the norms $R_1(n)$ and $R_2(n)$ as functions $H$ for $n=500$.
\clearpage
\begin{figure}
\includegraphics[scale=0.75]{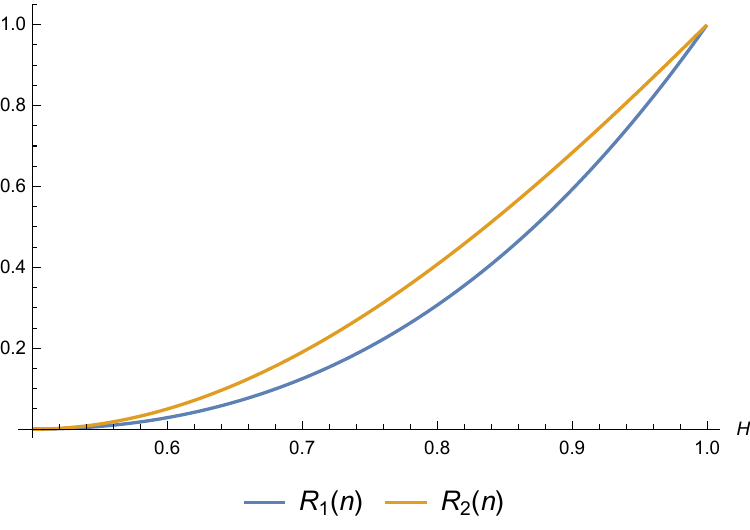}
\caption{Norms $R_1(n)$ and $R_2(n)$ for $n=500$}\label{fig:norms2}
\end{figure}

\bibliographystyle{amsplain}
\bibliography{fgnbib}

\end{document}